\newtheorem{theorem}{Theorem}[section]
\newtheorem{lemma}[theorem]{Lemma}
\newtheorem{proposition}[theorem]{Proposition}
\newtheorem{corollary}[theorem]{Corollary}
\newcommand{\suchthat}{\;\ifnum\currentgrouptype=16 \middle\fi|\;}
\newcommand{\Z}{\mathbb{Z}}
\newcommand{\C}{\mathbb{C}}
\newcommand{\Q}{\mathbb{Q}}
\newcommand{\R}{\mathbb{R}}
\newcommand{\Gal}[1]{\operatorname{Gal}#1}
\newcommand{\N}{{\rm{N}}}
\newcommand{\F}{\mathbb{F}}
\newcommand{\beq}{\begin{equation}}
\newcommand{\eeq}{\end{equation}}
\newcommand{\ZZ}{\mathbb{Z}}
\newcommand{\FF}{\mathbb{F}}
\renewcommand\b\bullet
\renewcommand\c\times
\theoremstyle{definition}
\newtheorem{remark}[theorem]{Remark}
\newtheorem{definition}[theorem]{Definition}
\def\C{{\mathbb C}}
\def\R{{\mathbb R}}
\def\Z{{\mathbb Z}}
\def\Q{{\mathbb Q}}
\def\a{{\mathfrak a}}
\def\b{{\mathfrak b}}
\def\O_K{{\Cal{O}_{K}}}
\def\O_F{{\Cal{O}_{F}}}
\def\N_F{{\Cal{N}_{F/\Q}}}
\begin{document}

\title[Equidistribution of Gross points over rational function fields]{Equidistribution of Gross points over \\ rational function fields}

\author{Ahmad El-Guindy, Riad Masri, Matthew Papanikolas, and Guchao Zeng}

\address{Science Program, Texas A\&M University in Qatar, Doha, Qatar}
\address{Department of Mathematics, Faculty of Science, Cairo University, Giza, Egypt 12613}
\email{a.elguindy@gmail.com}

\address{Department of Mathematics, 3368 TAMU, Texas A\&M University, College Station, TX 77843-3368, U.S.A.}
\email{masri@math.tamu.edu}

\address{Department of Mathematics, 3368 TAMU, Texas A\&M University, College Station, TX 77843-3368, U.S.A.}
\email{papanikolas@tamu.edu}

\address{Science Program, Texas A\&M University in Qatar, Doha, Qatar}
\email{guchao.zeng@qatar.tamu.edu}

\date{March 27, 2020}

\thanks{This publication was made possible by the NPRP award [NPRP 9-336-1-069]
from the Qatar National Research Fund (a member of The Qatar Foundation).
The statements made herein are solely the responsibility of the authors.}

\begin{abstract}
In this paper we prove a sparse equidistribution theorem for Gross points over the
rational function field $\F_q(t)$. We apply this result to study the reduction map from CM Drinfeld modules
to supersingular Drinfeld modules.
Our proofs rely crucially on a period formula due to M.~Papikian and F.-T.\ Wei/J.~Yu,
and a Lindel\"of-type bound for central values of Rankin-Selberg $L$--functions associated
to twists of automorphic forms of Drinfeld-type by ideal class group characters.
\end{abstract}

\maketitle

\section{Introduction and statement of results}

In their ICM article \cite{MV}, Michel and Venkatesh developed a general framework concerning
sparse equidistribution problems for toric orbits of special points on Shimura varieties constructed from
quaternion algebras over totally real fields. These problems have been solved in a wide range of settings and have many striking applications.
For example, Michel~\cite{Michel} proved sparse equidistribution of Gross points corresponding to supersingular elliptic curves and used this
to study the supersingular reduction of CM elliptic curves.

In this paper we will prove a sparse equidistribution theorem for Gross points over the rational function field $k=\F_q(t)$;
see Theorem \ref{Grossed} and Corollary \ref{grossed}. Roughly speaking, given a prime $P_0$, one can construct a definite Shimura curve $X_{P_0}$
which is a disjoint union of genus zero curves. Given a Gross point $x_{0,D}$ of discriminant $D$ on $X_{P_0}$ and a subgroup
$G_D < \textrm{Pic}(\mathcal{O}_D)$ of sufficiently small index, we prove that the orbit $G_D \cdot x_{0,D}$ becomes quantitatively
equidistributed with respect to a natural probability measure on $X_{P_0}$ as $\textrm{deg}(D) \rightarrow \infty$. This result
can be viewed as part of the framework developed in \cite{MV}, in which the quaternion algebra is definite and the base field is $\mathbb{F}_q(t)$.

In analogy with \cite{Michel}, we apply Theorem \ref{Grossed}
to study the supersingular reduction of CM Drinfeld modules. Assume that $P_0$ is inert in $K=k(\sqrt{D})$. Let $\mathcal{D}(\mathcal{O}_D)$
be the set of isomorphism classes of rank 2 Drinfeld modules over $\overline{k}$ with complex multiplication by $\mathcal{O}_D$
and let $\mathcal{D}^{ss}(\overline{\F}_{P_0})$ be the set of isomorphism classes of supersingular Drinfeld modules over $\overline{\F}_{P_0}$.
If $\mathfrak{P}$ is a prime above $P_0$ in the Hilbert class field $H$ of $K$, there is a reduction map
\begin{align*}
r_{\mathfrak{P}} : \mathcal{D}(\mathcal{O}_D) \longrightarrow \mathcal{D}^{ss}(\overline{\F}_{P_0}).
\end{align*}
We will prove that the reduction map $r_{\mathfrak{P}}$ is surjective for all
$\textrm{deg}(D) \gg \textrm{deg}(P_0)$, in a quantitative sense which we make precise; see Theorem \ref{reduction}.

\subsection{Gross points over $\F_q(t)$} \label{GrossPoints} We begin by developing
some background we will need concerning Gross points over $\F_q(t)$ following
closely the discussion in Gross \cite{G}, Wei and Yu \cite[\S\S 1.1--2]{WY}.

Let $q \in \ZZ$ be a power of an odd prime, $A = \F_q[t]$ be the polynomial ring in a variable $t$, and
$k=\F_q(t)$ be the rational function field.  We let $k_{\infty}=\F_q((t^{-1}))$ be the completion of $k$ at its infinite place.  Let $P_0 \in A$ be a monic irreducible polynomial of degree $\deg(P_0) \geq 3$.
Let $D \in A$ be irreducible of odd degree, in particular ensuring that $K=k(\sqrt{D})$
is imaginary quadratic, i.e., $\infty$ does not split in $K$.  Assume further that $P_0$ is inert in $K$, and so $\chi_K(P_0)=-1$, where $\chi_K$ is the quadratic character associated to $K$.
Let $\mathcal{O}_D=A[\sqrt{D}]$ be the ring of integers,
$\mathrm{Pic}(\mathcal{O}_D)$ be the class group, and $h(D)$ be the class number of $K$, respectively.

Let $B$ be the quaternion algebra over $k$ which is ramified at $P_0$ and $\infty$, and fix a maximal $A$-order $R$ of $B$. There are finitely many equivalences classes of left ideals of $R$, and the number
$n$ of such classes is called the \textit{class number} of $R$.  Let $\{I_i\}_{i=1}^{n}$ be a set of representatives of these equivalence classes.  One associates to each $I_i$ the maximal right $A$-order of $B$ defined by
\[
R_i:=\{x \in B:~I_i x \subseteq I_i\}.
\]

Given a finite place $v$ of $k$, let $k_v$ be the completion of $k$ at $v$, let $A_v$ be the closure of $A$ in $k_v$,
let $\hat{k}$ be the finite adele ring of $k$, and let $\hat{A}=\prod_v A_v$. Also, let $\hat{R}=R \otimes_A \hat{A}$ and $\hat{B}=B \otimes_k \hat{k}$.

The left ideal classes of $R$ correspond to the orbits of $B^{\times}$ acting on the right of $\hat{R}^{\times} \backslash \hat{B}^{\times}$,
so that the class number $n$ is the number of double cosets,
\[
n=[\hat{R}^{\times} \backslash \hat{B}^{\times}/B^{\times}].
\]
The choice of representative ideals $\{I_i\}_{i=1}^{n}$ corresponds to a choice of coset representatives $\{g_i\}_{i=1}^{n}$ in
$\hat{R}^{\times} \backslash \hat{B}^{\times}$ such that
\begin{equation}\label{Bhat}
 \hat{B}^{\times} = \bigcup_{i=1}^{n}\hat{R}^{\times} g_i B^{\times}.
\end{equation}
The right order $R_i$ is given by
\[
R_i=B \cap g_i^{-1}\hat{R}g_i.
\]

An \textit{optimal embedding} $f : \mathcal{O}_D \hookrightarrow R_i$ of $\mathcal{O}_D$ into $R_i$
is a field embedding $f:K \rightarrow B$ which satisfies
\begin{equation*}
f(K) \cap g_i^{-1}\hat{R}g_i = f(\mathcal{O}_D)
\end{equation*}
in $\hat{B}$. The group $B^{\times}$ acts on the right of the set
\[
\hat{R}^{\times} \backslash \hat{B}^{\times} \times \mathrm{Hom}(K,B)
\]
by
\[
(g,f) \mapsto (gb, b^{-1}fb).
\]

There is a bijection between the set of all optimal embeddings of $\mathcal{O}_D$ into the $n$ orders $R_i$, modulo conjugation by $R_i^{\times}$,
and the classes $(g,f) \mod B^{\times}$ in the quotient space
\begin{equation}\label{quotient}
(\hat{R}^{\times} \backslash \hat{B}^{\times} \times \mathrm{Hom}(K,B))/B^{\times}
\end{equation}
satisfying
\begin{equation}\label{fid2}
f(K) \cap g^{-1}\hat{R}g = f(\mathcal{O}_D).
\end{equation}

The quotient space \eqref{quotient} can be interpreted geometrically as the set of $K$-points of a definite Shimura
curve $X_{P_0}$ defined over $k$ as follows.
There is a genus zero curve $Y$ defined over $k$ associated to $B$ whose points over $K$ are
\[
Y(K)=\{y \in B \otimes_k K:~y \neq 0, ~ \mathrm{Tr}(y)=N(y)=0\}/K^{\times}.
\]
The curve $Y$ can be described explicitly as a conic in $\mathbb{P}^2$. The group $B^{\times}$ acts on $Y$ on the right by conjugation. Moreover,
$Y(K)$ can be canonically identified with the set of embeddings $\mathrm{Hom}(K, B)$.  Define the definite Shimura curve $X_{P_0}$ by the double coset space
\[
X_{P_0}:= (\hat{R}^{\times} \backslash \hat{B}^{\times} \times Y)/B^{\times}.
\]
The decomposition \eqref{Bhat} gives a bijection to a disjoint union of genus~$0$ curves,
\[
X_{P_0} \rightarrow \bigsqcup_{i=1}^{n}X_i
\]
defined by
\[
(\hat{R}^{\times} g_i, y) \mod B^{\times} \longmapsto y \mod R_i^{\times},
\]
where $X_i:=Y/R_i^{\times}$ and $R_i^{\times} = g_i^{-1}\hat{R}^{\times}g_i^{-1} \cap B^{\times}$ is a finite group for $i=1, \ldots, n$.

\begin{definition}
A \emph{Gross point} of discriminant $D$ on $X_{P_0}(K)$
is a point $x=(g,y)$ in the image
\[
\textrm{Image}\left[\hat{R}^{\times} \backslash \hat{B}^{\times} \times Y(K) \longrightarrow X_{P_0}(K) \right]
\]
such that the embedding $f \in \mathrm{Hom}(K,B)$ corresponding to the
component $y \in Y(K)$ of $x$ satisfies \eqref{fid2}.
If the component $g \in \hat{R}^{\times} \backslash \hat{B}^{\times}/B^{\times}$ of $x$ is congruent to the double coset representative $g_i$, then $x$ lies on the component $X_i(K)$ of $X_{P_0}(K)$.
\end{definition}

Let $\mathrm{Gr}_{D,P_0}$ denote the set of Gross points of discriminant $D$. There is an action of
the group $\mathrm{Pic}(\mathcal{O}_D)$ on $\mathrm{Gr}_{D,P_0}$ given as follows.
Let $\hat{K}=K \otimes_{k} \hat{k}$ and $\hat{\mathcal{O}}_D=\mathcal{O}_D \otimes_{A} \hat{A}$. Then
$$\mathrm{Pic}(\mathcal{O}_D) \cong \hat{\mathcal{O}}_{D}^{\times} \backslash \hat{K}^{\times}/ K^{\times}.$$ Let
$x=(g,y)$ be a Gross point of discriminant $D$, and let $f \in \mathrm{Hom}(K,B)$ be the embedding corresponding
to the component $y$. This embedding induces a homomorphism $\hat{f}: \hat{K}^{\times} \rightarrow \hat{B}^{\times}$. Let
$a \in \hat{K}^{\times}$ and define
\[
x_a:=(g\hat{f}(a),y).
\]
This gives a free action of $\mathrm{Pic}(\mathcal{O}_D)$ on $\mathrm{Gr}_{D,P_0}$ which divides the set of Gross points
into two simple transitive orbits of size $h(D)$; in particular, $\#\mathrm{Gr}_{D,P_0} = 2h(D)$ (see e.g. \cite[p.~133]{G} and \cite[Lem.~1.4]{WY}).
We denote this action by $x \mapsto x^{\sigma}$ for $\sigma \in \mathrm{Pic}(\mathcal{O}_D)$.

The action of $\mathrm{Pic}(\mathcal{O}_D)$ on the set of Gross points of discriminant $D$ also translates to an action of $\mathrm{Pic}(\mathcal{O}_D)$
on the corresponding optimal embeddings. To describe this, we follow the first paragraph of \cite[p.~134]{G}.

Let $\a$ be the ideal (projective module of rank 1 in $K$) which is determined by the idele
$a \bmod {\hat{\mathcal{O}}_{D}}^{\times}$; specifically, $\a=K\cap a \hat{\mathcal{O}}_D$. Let $R_{i, \a}^{\prime}$ be the right order of the left $R_i$-module
$R_i \a$. More precisely, since $R_{i}$ is a maximal right order of $B$, we can consider the set of $n$ equivalence classes of
left $R_i$-ideals in $B$. Then, viewing $R_i \a$ as a left $R_i$-ideal, we can (as above) associate to
$R_i \a$ a maximal right order $R_{i, \a}^{\prime}$ of $B$ defined by
\begin{equation} \label{Rprime}
R_{i, \a}'=\{b \in B : R_i \a b \subseteq R_i\a\}.
\end{equation}
Since $\mathcal{O}_D$ also acts on the right of $\a$, the optimal embedding $f: \mathcal{O}_D \hookrightarrow R_i$ corresponding
to the Gross point $x$ induces an optimal embedding $f^{\prime}: \mathcal{O}_D \hookrightarrow R_{i, \a}^{\prime}$ corresponding to the Gross point $x_a$.
We denote the embedding $f^{\prime}$ by $f^{\sigma}$ and the maximal
order $R_{i, \a}^{\prime}$ by $R_{i,\sigma}^{\prime}$, where
$\sigma \in \mathrm{Pic}(\mathcal{O}_D)$ corresponds to the class of $\a$ in $\mathrm{Pic}(\mathcal{O}_D)$.

\subsection{Equidistribution of Gross points over $\F_q(t)$}

As we have seen, the Shimura curve $X_{P_0}$ is the disjoint union of $n$ genus zero curves $X_i$ defined over $k$.
Let $\mathrm{Pic}(X_{P_0})$ denote the Picard group of $X_{P_0}$. If $e_i$ denotes the class of degree $1$ in $\mathrm{Pic}(X_{P_0})$ corresponding to the component $X_i$, then we have
\[
\mathrm{Pic}(X_{P_0})=\mathbb Z e_1 \oplus \cdots \oplus \mathbb Z e_{n}.
\]
Since a Gross point $x \in \mathrm{Gr}_{D,{P_0}}$ lies on a component $X_i$, it determines
a class $e_{x}$ in $\mathrm{Pic}(X_{P_0})$ which for notational convenience we continue to denote by $x$.

For each $i$, we let $w_i := \#(R_i^{\times})/(q-1)$, and we define a probability measure on the set of divisor classes $\mathcal{S}:=\{e_i\}_{i=1}^{n}$ by
\[
\mu_{P_0}(e_i):=\frac{w_i^{-1}}{\sum_{j=1}^{n}w_j^{-1}}.
\]
We note that Gekeler~\cite[Satz (5.9)]{Gekeler83} has proved the mass formula
\begin{align}\label{mass}
  \sum_{j=1}^{n} \frac{1}{w_j} = \frac{q^{\deg(P_0)} - 1}{q^2-1},
\end{align}
which shows how the total mass grows with the degree of~$P_0$.  Furthermore, for each $i$ we have $w_i$ is either $1$ or $q+1$.

Now given a Gross point $x=x_{0,D} \in \mathrm{Gr}_{D,P_0}$ and a subgroup
$G_D < \mathrm{Pic}(\mathcal{O}_D)$, define the $G_D$-orbit
\[
G_D \cdot x_{0,D}:=\{x_{0,D}^{\sigma}:~\sigma \in G_D\} \subseteq \mathcal{S}.
\]
We want to study the distribution of the sequence of orbits $G_D \cdot x_{0,D}$ on $\mathcal{S}$ with respect to the probability measure $\mu_{P_0}$
as $\deg(D) \rightarrow \infty$. To make sense of this distribution problem, we must have
\begin{equation}\label{growth}
\# (G_D \cdot x_{0,D}) \rightarrow \infty
\end{equation}
as $\deg(D) \rightarrow \infty$. In particular, for fixed $P_0$, the size of the orbit $G_D \cdot x_{0,D}$ will eventually
exceed the number $n=n(P_0)$ of divisor classes $e_i$, which (as we have seen) equals the class number of the maximal order $R$ in $B$.
As shown in the following result,
the condition \eqref{growth} is ensured by a suitable bound on the index of $G_D$ in $\mathrm{Pic}(\mathcal{O}_D)$.

\begin{proposition}\label{indexbound}
Fix an absolute constant $0 \leq \eta < 1/2$ and let $G_D < \mathrm{Pic}(\mathcal{O}_D)$ be
a subgroup of index satisfying
\begin{equation}\label{index}
[\mathrm{Pic}(\mathcal{O}_D):G_D] \leq \lVert D \rVert^{\eta}
\end{equation}
where $\lVert D \rVert :=q^{\deg(D)}$. Then
\[
\# (G_D \cdot x_{0,D}) \gg_{\varepsilon} q^{-1} \lVert D \rVert^{(\frac{1}{2}-\eta) - \varepsilon}
\]
where the implied constant is effective. In particular, $\# (G_D \cdot x_{0,D}) \rightarrow \infty$ as $\deg(D) \rightarrow \infty$.
\end{proposition}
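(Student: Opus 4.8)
The plan is to reduce the proposition to an effective lower bound for the class number $h(D)$. The first step is purely formal: since the action $x\mapsto x^{\sigma}$ of $\mathrm{Pic}(\mathcal O_D)$ on the set of Gross points of discriminant $D$ is free, the map $\sigma\mapsto x_{0,D}^{\sigma}$ is injective on $\mathrm{Pic}(\mathcal O_D)$, so the $\#G_D$ points $x_{0,D}^{\sigma}$ with $\sigma\in G_D$ are pairwise distinct and $\#(G_D\cdot x_{0,D})=\#G_D$. Since $[\mathrm{Pic}(\mathcal O_D):G_D]\cdot\#G_D=h(D)$, the index hypothesis \eqref{index} gives
\[
\#(G_D\cdot x_{0,D})=\frac{h(D)}{[\mathrm{Pic}(\mathcal O_D):G_D]}\ \geq\ h(D)\,\lVert D\rVert^{-\eta},
\]
so it suffices to prove the effective estimate $h(D)\gg_{\varepsilon}\lVert D\rVert^{1/2-\varepsilon}/q$.

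For this I would use the function-field analytic class number formula: up to an effective bounded factor one has $h(D)=q^{g_K}L(1,\chi_K)$, where $g_K=(\deg D-1)/2$ is the genus of $K=k(\sqrt D)$ and $L(s,\chi_K)=\prod_{j}(1-\alpha_j q^{-s})$ is the $L$-polynomial attached to $\chi_K$, of degree $2g_K$. Since $q^{g_K}=q^{-1/2}\lVert D\rVert^{1/2}$, everything comes down to showing $L(1,\chi_K)\gg_{\varepsilon}\lVert D\rVert^{-\varepsilon}$. By Weil's Riemann Hypothesis the $\alpha_j$ all satisfy $\lvert\alpha_j\rvert=q^{1/2}$; the crude bound $\lvert L(1,\chi_K)\rvert\geq(1-q^{-1/2})^{2g_K}$ is, however, useless here, because for fixed $q$ it only yields $h(D)\gg_q\lVert D\rVert^{1/2-c(q)}$ with $c(q)>0$ bounded away from $0$ (e.g.\ $c(3)>1/2$), which would make the exponent $1/2-\eta$ negative. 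Instead I would run the function-field analogue of Littlewood's argument: expanding $\log L(1,\chi_K)=\sum_{m\geq1}\frac{1}{mq^{m}}\sum_{\deg f=m}\Lambda(f)\chi_K(f)$, one bounds the inner character sum trivially by $q^{m}$ for $m\leq M$ and by $O\bigl((\deg D)\,q^{m/2}\bigr)$ (square-root cancellation, again from Weil) for $m>M$, and optimizes at $M\asymp\log_{q}(\deg D)$. This gives $\lvert\log L(1,\chi_K)\rvert\ll\log\log(\deg D)$, hence $L(1,\chi_K)\gg 1/\deg D\gg_{\varepsilon}\lVert D\rVert^{-\varepsilon}$ with an effective implied constant. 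Combining with the display above, $h(D)\gg_{\varepsilon}\lVert D\rVert^{1/2-\varepsilon}/q$ and therefore $\#(G_D\cdot x_{0,D})\gg_{\varepsilon}q^{-1}\lVert D\rVert^{(1/2-\eta)-\varepsilon}$; as $\eta<1/2$ this tends to infinity with $\deg D$.

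The only genuine obstacle is this effective lower bound for $h(D)$: the estimate coming from Weil's bound alone loses a fixed power $\lVert D\rVert^{c(q)}$ for fixed $q$, and one really does need the Littlewood-type splitting to reduce the loss to an arbitrarily small power $\lVert D\rVert^{\varepsilon}$ while keeping all constants explicit. Everything else — the freeness of the $\mathrm{Pic}(\mathcal O_D)$-action and the index bookkeeping — is formal, which is why the final implied constant is effective.
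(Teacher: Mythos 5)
Your proposal is correct and follows essentially the same route as the paper: reduce $\#(G_D\cdot x_{0,D})=|G_D|=h(D)/[\mathrm{Pic}(\mathcal{O}_D):G_D]$ via freeness of the action, then invoke the class number formula together with an effective Siegel-type bound $L(1,\chi_K)\gg_{\varepsilon}\lVert D\rVert^{-\varepsilon}$ coming from Weil's Riemann Hypothesis. The only difference is that the paper simply cites this last bound (from Altu\u{g}--Tsimerman, Lem.~3.3), whereas you re-derive it with the Littlewood-type splitting of $\log L(1,\chi_K)$, correctly noting that the crude bound $(1-q^{-1/2})^{2g_K}$ would not suffice.
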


\begin{proof}
Let $L(\chi_K,s)$ be the $L$--function of the quadratic character $\chi_K$.
Then we have the class number formula (see e.g. \cite[\S 2.2]{CWY})
\[
L(\chi_K,1)=q\lVert D \rVert^{-1/2}h(D).
\]
By work of Weil, the Riemann hypothesis is known for the $L$--function $L(\chi_K,s)$,
which allows one to prove the following effective Siegel-type bound (see e.g. \cite[Lem.~3.3]{AT})
\[
L(\chi_K,1) \gg_{\varepsilon} \lVert D \rVert^{-\varepsilon}
\]
for any $\varepsilon > 0$. This yields the following effective lower bound for the class number,
\begin{equation}\label{siegel}
h(D) \gg_{\varepsilon} q^{-1} \lVert D \rVert^{\frac{1}{2}-\varepsilon}.
\end{equation}
In particular, from \eqref{index} and \eqref{siegel} we get
\[
\# (G_D \cdot x_{0,D}) = |G_D| = \frac{h(D)}{[\mathrm{Pic}(\mathcal{O}_D):G_D]} \gg \frac{h(D)}{\lVert D \rVert^{\eta}} \gg_{\varepsilon} q^{-1}\lVert D \rVert^{(\frac{1}{2}-\eta) - \varepsilon}.
\]
\end{proof}

We will prove that if $0 \leq \eta < 1/4$ and
\[
[\mathrm{Pic}(\mathcal{O}_D):G_D] \leq \lVert D \rVert^{\eta},
\]
then the sequence of orbits $G_D \cdot x_{0,D}$
becomes quantitatively equidistributed on $\mathcal{S}$ with respect to $\mu_{P_0}$ as $\deg(D) \rightarrow \infty$. In fact,
we will deduce our equidistribution results as a consequence of the following theorem.

\begin{theorem}\label{Grossed}
Let $q \in \Z$ be a power of an odd prime, $A=\F_q[t]$ be the polynomial ring in a variable $t$, and
$k=\F_q(t)$ be its fraction field. Let $P_0 \in A$ be a monic irreducible polynomial satisfying $\deg(P_0) \geq 3$.
Let $D \in A$ be an irreducible polynomial of odd degree such that $P_0$ is inert in the imaginary quadratic field
$K=k(\sqrt{D})$. Given a subgroup $G_D < \mathrm{Pic}(\mathcal{O}_D)$, a Gross point $x=x_{0,D} \in \mathrm{Gr}_{D,P_0}$,
and a divisor class $e_i \in \mathcal{S}$, define the counting function
\[
N_{G_D, P_{0},e_i}:=\# \{\sigma \in G_D:~ x_{0,D}^{\sigma} = e_i\}.
\]
Then for all $\varepsilon > 0$, we have
\[
\frac{N_{G_D,P_{0},e_i}}{|G_D|} = \mu_{P_0}(e_i) + O_{\varepsilon}\left([\mathrm{Pic}(\mathcal{O}_D):G_D]q^{5/4}
\lVert P_0 \rVert^{\varepsilon} \lVert D \rVert^{-1/4+\varepsilon}\right)
\]
where the implied constant in the error term is uniform in $e_i$ and effective.
\end{theorem}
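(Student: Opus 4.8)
The plan is to combine harmonic analysis on the finite group $\mathrm{Pic}(\mathcal{O}_D)$ with the spectral (Hecke) decomposition of functions on $\mathcal{S}$, thereby reducing the problem to a bound for twisted Weyl sums of automorphic forms of Drinfeld type over Gross points, which is then supplied by the period formula and the Lindel\"of-type bound. First I would identify the space of $\mathbb{C}$-valued functions on $\mathcal{S}=\{e_i\}_{i=1}^n$ with the (complexified) Brandt module attached to $R$, equipped with the inner product $\langle f,g\rangle:=\sum_i\mu_{P_0}(e_i)f(e_i)\overline{g(e_i)}$, for which the Hecke/Brandt operators act self-adjointly. The constant function $\mathbf{1}$ is then the common Eisenstein eigenvector (the Brandt matrices have constant row sums), and its orthogonal complement $\mathcal{V}_0$ is spanned by the finitely many cuspidal Hecke eigenforms $\phi$ of level $P_0$ (those automorphic forms of Drinfeld type appearing in the abstract), with $\dim\mathcal{V}_0=n-1$. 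Decomposing $\mathbf{1}_{e_i}=\mu_{P_0}(e_i)\,\mathbf{1}+g_i$ with $g_i\in\mathcal{V}_0$ gives
\[
\frac{N_{G_D,P_0,e_i}}{|G_D|}=\mu_{P_0}(e_i)+E_i,\qquad E_i:=\frac{1}{|G_D|}\sum_{\sigma\in G_D}g_i\!\left(x_{0,D}^{\sigma}\right),
\]
so it suffices to bound $E_i$ uniformly in $i$.

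Next I would expand the characteristic function of $G_D$ over the dual group $G_D^{\perp}\subset\widehat{\mathrm{Pic}(\mathcal{O}_D)}$, which writes $E_i=\tfrac{1}{h(D)}\sum_{\chi\in G_D^{\perp}}W_\chi(g_i)$ in terms of the twisted Weyl sums $W_\chi(\psi):=\sum_{\sigma\in\mathrm{Pic}(\mathcal{O}_D)}\overline{\chi(\sigma)}\,\psi(x_{0,D}^{\sigma})$. Expanding $g_i$ into Brandt-normalized cuspidal eigenforms $\phi$ and applying Cauchy--Schwarz in the eigenform variable, together with the elementary bound $\|g_i\|^2=\mu_{P_0}(e_i)(1-\mu_{P_0}(e_i))\le 1$ after rescaling, gives $|W_\chi(g_i)|\ll\big(\sum_\phi|W_\chi(\phi)|^2\big)^{1/2}$, and hence
\[
|E_i|\ \ll\ \frac{[\mathrm{Pic}(\mathcal{O}_D):G_D]}{h(D)}\ \max_{\chi\in G_D^{\perp}}\Big(\sum_\phi|W_\chi(\phi)|^2\Big)^{1/2}.
\]
The point of the Cauchy--Schwarz in $\phi$ (rather than a term-by-term estimate of the $\ll\lVert P_0\rVert/q$ eigenforms) is that it is what ultimately produces only a power $\lVert P_0\rVert^{\varepsilon}$ in the level aspect.

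For the inner maximum I would invoke the period formula of Papikian and Wei--Yu/Yu: for each cuspidal eigenform $\phi$ and each ring-class character $\chi$ it expresses $|W_\chi(\phi)|^2$ as an explicit elementary monomial in $q$, $\lVert D\rVert$, $\lVert P_0\rVert$ (of size $\asymp q^{3/2}\lVert D\rVert^{1/2}\lVert P_0\rVert^{-1}$ up to $\lVert D\rVert^{\varepsilon}$, the $\lVert D\rVert^{1/2}$ being the ``square-root of the discriminant'' forced by the functional equation) times the central value $L(\tfrac12,\phi\times\theta_\chi)$ of the Rankin--Selberg $L$-function of $\phi$ twisted by the automorphic induction $\theta_\chi$ of the Hecke character of $K$ attached to $\chi$. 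Applying the Lindel\"of-type bound $L(\tfrac12,\phi\times\theta_\chi)\ll_\varepsilon(\lVert P_0\rVert\lVert D\rVert)^{\varepsilon}$ uniformly, and summing over the $n-1$ eigenforms with $n\le(\lVert P_0\rVert-1)/(q-1)$ from Gekeler's mass formula \eqref{mass}, gives $\sum_\phi|W_\chi(\phi)|^2\ll_\varepsilon q^{1/2}\lVert D\rVert^{1/2}(\lVert P_0\rVert\lVert D\rVert)^{\varepsilon}$. Inserting this and then using the class number formula $h(D)=q^{-1}\lVert D\rVert^{1/2}L(\chi_K,1)$ together with the Weil-RH--based lower bound $h(D)\gg_\varepsilon q^{-1}\lVert D\rVert^{1/2-\varepsilon}$ from Proposition~\ref{indexbound} yields
\[
|E_i|\ \ll_\varepsilon\ [\mathrm{Pic}(\mathcal{O}_D):G_D]\cdot\frac{q^{1/4}\lVert D\rVert^{1/4}}{h(D)}\,(\lVert P_0\rVert\lVert D\rVert)^{\varepsilon}\ \ll_\varepsilon\ [\mathrm{Pic}(\mathcal{O}_D):G_D]\,q^{5/4}\,\lVert P_0\rVert^{\varepsilon}\,\lVert D\rVert^{-1/4+\varepsilon},
\]
which is the claimed estimate; all implied constants are effective because those in Weil's theorem, Gekeler's formula, the period formula, and the Lindel\"of bound are.

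The main obstacle is having the period formula available in exactly the twisted shape needed here --- with ring-class characters and with the normalization matched precisely to the definition of Gross points and of the measure $\mu_{P_0}$ --- and, above all, having the Lindel\"of-type bound for the whole family of Rankin--Selberg central values uniform both in the character $\chi$ (of conductor $\asymp\lVert D\rVert$) and in the level $P_0$; these are the two substantial inputs. A secondary but genuinely delicate point is the bookkeeping of the numerous elementary $q$-, $\lVert D\rVert$- and $\lVert P_0\rVert$-factors coming from the period formula, Gekeler's mass formula, and the class number formula, arranged so that the $\lVert P_0\rVert$-dependence collapses to $\lVert P_0\rVert^{\varepsilon}$ and the exponent of $\lVert D\rVert$ is exactly $-1/4$ --- this is what dictates the use of Cauchy--Schwarz in the eigenform variable and the careful appeal to the lower bound for $h(D)$.
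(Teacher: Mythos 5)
Your proposal is correct and follows essentially the same route as the paper: spectral decomposition of the indicator of $e_i$ into the Eisenstein vector plus cuspidal Brandt/Hecke eigenforms, Fourier expansion over the characters trivial on $G_D$, Cauchy--Schwarz in the eigenform variable, the Papikian/Wei--Yu period formula, the Lindel\"of-type bound of Theorem~\ref{lindelof}, the dimension bound coming from Gekeler's mass formula, and the Siegel-type lower bound for $h(D)$. The only point you gloss is that the $\lVert P_0\rVert^{-1}$ in your ``elementary monomial'' is really $1/\langle f,f\rangle_{P_0}$, so one must additionally invoke Papikian's lower bound $\langle f, f\rangle_{P_0} \gg_{\varepsilon} \lVert P_0\rVert^{1-\varepsilon}$ (as the paper does) to obtain the $\lVert P_0\rVert^{\varepsilon}$ level dependence.
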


The following corollary is now an immediate consequence of Theorem~\ref{Grossed}.

\begin{corollary}\label{grossed}
Let notation and assumptions be as in Theorem \ref{Grossed}.  Fix an absolute constant $0 \leq \eta < 1/4$ and let
$G_D < \mathrm{Pic}(\mathcal{O}_D)$ be a subgroup of index satisfying
\[
[\mathrm{Pic}(\mathcal{O}_D):G_D] \leq \lVert D \rVert^{\eta}.
\]
Then the sequence of orbits $G_D \cdot x_{0,D}$ becomes quantitatively equidistributed on $\mathcal{S}$
with respect to $\mu_{P_0}$ as $\deg(D) \rightarrow \infty$. In particular, we have
\begin{equation}\label{keyidentity}
\frac{N_{G_D,P_{0},e_i}}{|G_D|} = \mu_{P_0}(e_i) + O_{\varepsilon}\left(q^{5/4}\lVert P_0 \rVert^{\varepsilon} \lVert D \rVert^{-(1/4-\eta)+\varepsilon}\right).
\end{equation}
\end{corollary}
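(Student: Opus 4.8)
The plan is to derive the corollary as a one-line specialization of Theorem~\ref{Grossed}. First I would record the qualitative shape of the statement: for a fixed monic irreducible $P_0$, the class number $n = n(P_0)$ is fixed, so $\mathcal{S} = \{e_i\}_{i=1}^{n}$ is a \emph{fixed} finite set and $\mu_{P_0}$ is a fixed probability measure on it. Consequently, proving that the orbit-counting measures $e_i \mapsto N_{G_D,P_0,e_i}/|G_D|$ become quantitatively equidistributed on $\mathcal{S}$ with respect to $\mu_{P_0}$ as $\deg(D) \to \infty$ is exactly the assertion that $N_{G_D,P_0,e_i}/|G_D| \to \mu_{P_0}(e_i)$ for every $i$, with an explicit rate and with the implied constant uniform in $e_i$ (both of which are furnished directly by Theorem~\ref{Grossed}).

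Next I would simply substitute the hypothesis $[\mathrm{Pic}(\mathcal{O}_D):G_D] \leq \lVert D \rVert^{\eta}$ into the error term of Theorem~\ref{Grossed}. This turns
\[
[\mathrm{Pic}(\mathcal{O}_D):G_D]\, q^{5/4}\lVert P_0 \rVert^{\varepsilon}\lVert D \rVert^{-1/4+\varepsilon}
\]
into $q^{5/4}\lVert P_0 \rVert^{\varepsilon}\lVert D \rVert^{\eta - 1/4 + \varepsilon} = q^{5/4}\lVert P_0 \rVert^{\varepsilon}\lVert D \rVert^{-(1/4-\eta)+\varepsilon}$, which is precisely the bound asserted in \eqref{keyidentity}. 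Since $0 \leq \eta < 1/4$, choosing $\varepsilon < 1/4 - \eta$ makes the exponent $-(1/4-\eta)+\varepsilon$ strictly negative; as $\lVert D \rVert = q^{\deg(D)} \to \infty$ while $q$ and $\lVert P_0 \rVert$ stay fixed, the error tends to $0$. Pointwise convergence to $\mu_{P_0}$ with this explicit rate is the desired equidistribution.

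Finally, for completeness I would point out that the distribution problem is not vacuous: by Proposition~\ref{indexbound} the index bound with $\eta < 1/2$ (a fortiori with $\eta < 1/4$) gives $\#(G_D \cdot x_{0,D}) = |G_D| \gg_{\varepsilon} q^{-1}\lVert D \rVert^{(1/2-\eta)-\varepsilon} \to \infty$, so the orbits genuinely grow and eventually sample every class in $\mathcal{S}$. There is no real obstacle here—the entire analytic content resides in Theorem~\ref{Grossed}—and the only points needing a moment's care are the harmless bookkeeping of the $\varepsilon$'s (absorbing the extra factor $\lVert D \rVert^{\varepsilon}$ into a slightly larger $\varepsilon$) and the remark that the uniformity in $e_i$ in \eqref{keyidentity} is inherited verbatim from the theorem.
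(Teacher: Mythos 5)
Your proposal is correct and matches the paper's treatment: the paper presents Corollary~\ref{grossed} as an immediate consequence of Theorem~\ref{Grossed}, obtained exactly by substituting the index bound $[\mathrm{Pic}(\mathcal{O}_D):G_D] \leq \lVert D \rVert^{\eta}$ into the error term, and your added remarks on the negativity of the exponent for $\eta < 1/4$ and the growth of the orbit via Proposition~\ref{indexbound} are harmless elaborations of the same argument.
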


\begin{remark} In their ``Equidistribution of Subgroups Conjecture'' \cite[Conjecture 1]{MV}, Michel and Venkatesh conjecture
that for any fixed $\eta > 0$, if $x_{0,D}$ is a Heegner point of fundamental discriminant $D <0$ and $G_D < \textrm{Pic}(\mathcal{O}_D)$
is a subgroup satisfying $|G_D| \geq |D|^{\eta}$, then the sequence of orbits $G_D \cdot x_{0,D}$ becomes equidistributed with respect
to the hyperbolic measure $d\mu(z):=(3/\pi)dxdy/y^2$ on the modular curve $Y_0(1)$ as $|D| \rightarrow \infty$.
Assuming the Generalized Riemann Hypothesis, this equidistribution statement
holds for all $\eta > 1/4$. Corollary \ref{grossed} implies that the analogous equidistribution statement for Gross points
over $\F_q(t)$ holds unconditionally for all $\eta > 1/4$. This is because we are able to employ the work of Deligne and Drinfeld
on the Riemann hypothesis over function fields.
\end{remark}

\begin{remark} In our setup, we have assumed that the polynomial $D \in A$ has odd degree. This assumption is made for convenience and is not essential
in our arguments. For example, the period formula of Papikian \cite{P2} and Wei/Yu \cite{WY} which we use takes a slightly different form when
$D$ has even degree, but our proof works the same in this case. Simply put, if $D$ has even degree then the equidistribution statements in
Theorem \ref{Grossed} and Corollary \ref{grossed} hold with the same rate of convergence.
\end{remark}

\subsection{Supersingular reduction of CM Drinfeld modules} \label{ssDrinfeld}
In this section we define the reduction map from CM Drinfeld modules to supersingular Drinfeld modules, and state our result showing that this map is
surjective if $\deg(D)$ is sufficiently large compared to $\deg(P_0)$ (in a quantitative sense which we will make precise).
We first recall some definitions about Drinfeld modules (see \cite[Ch.~4]{Goss}, \cite[Ch.~3]{Thakur} for more details).

Given a field $L/\FF_q$, the $q$-th power Frobenius endomorphism $\tau : L \to L$ generates an $\FF_q$-subalgebra of the endomorphism algebra of the additive group of $L$, which we denote by $L\{\tau\}$.  The ring $L\{\tau\}$ is the ring of twisted polynomials in $\tau$ with coefficients in $L$, and it is subject to the relation $\tau c = c^q \tau$ for all $c \in L$.  Fixing an $\FF_q$-algebra homomorphism $\iota : A \to L$, a \emph{Drinfeld module of rank~$r$ over~$L$} is an $\FF_q$-algebra homomorphism
\[
\phi: A \to L\{\tau\},
\]
so that for all $a \in A$,
\[
  \phi_a = \iota(a) + b_1 \tau + \cdots + b_{r \deg(a)} \tau^{r \deg(a)}, \quad b_{r \deg(a)} \neq 0.
\]
For two Drinfeld modules $\phi$, $\psi$ over $L$, a \emph{morphism} $u : \phi \to \psi$ over $L$ is a twisted polynomial $u \in L\{\tau\}$, so that
\[
 u \phi_a = \psi_a u, \quad \forall\,a \in A.
\]
If $\phi$ and $\psi$ have different ranks, then the only possible morphism between them is the zero morphism.

Henceforth, we will primarily focus on Drinfeld modules of rank~$2$.  Since a Drinfeld module $\phi$ is completely determined by its value $\phi_{t}$, we can fix a rank $2$ Drinfeld module over $L$ by setting,
\[
  \phi_t := \iota(t) + g \tau + \Delta \tau^2, \quad g, \Delta \in L,
\]
in which case the $j$-invariant $j(\phi) := g^{q+1}/\Delta$ is an isomorphism invariant of $\phi$ over an algebraic closure $\overline{L}$.

Let $\mathcal{D}(\mathcal{O}_D)$ be the set of $\overline{k}$-isomorphism classes of Drinfeld modules of rank~$2$ over $\overline{k}$ with complex multiplication (CM) by $\mathcal{O}_D$.  Here we take $\iota : A \hookrightarrow \overline{k}$ to be the inclusion map, and a Drinfeld module $\phi$ of rank~$2$ has complex multiplication by $\mathcal{O}_D$ if there is an extension of $\phi : \mathcal{O}_D \to L\{\tau\}$, coinciding with $\phi$ on $A$.  By the theory of complex multiplication \cite[Ch.~7]{Goss}, each isomorphism class in $\mathcal{D}(\mathcal{O}_D)$ is represented by a sign-normalized Drinfeld-Hayes module (of rank~$1$ as a Drinfeld $\mathcal{O}_D$-module, see~\cite[Ch.~7]{Goss}, \cite{Hayes79}).  Such a Drinfeld-Hayes module is defined over the Hilbert class field $H$ of $K$, i.e.\ the maximal abelian extension of $K$ that is everywhere unramified and totally split at the unique infinite place of $K$.  The group $\mathrm{Gal}(H/K) \cong \mathrm{Pic}(\mathcal{O}_D)$ acts simply transitively
on $\mathcal{D}(\mathcal{O}_D)$.  In particular, given a Drinfeld-Hayes module $\phi \in \mathcal{D}(\mathcal{O}_D)$,
we have
\[
\mathcal{D}(\mathcal{O}_D) = \{[\phi^{\sigma}] : \sigma \in \mathrm{Pic}(\mathcal{O}_D)\},
\]
where $\phi^{\sigma}$ denotes the action of $\mathrm{Pic}(\mathcal{O}_D)$ on $\phi$. There are $h(D)$ such isomorphism classes~\cite[Cor.~5.13]{Hayes79}.

Moreover, $\phi$ is defined over the ring of integers $\mathcal{O}_H$ of $H$, so that
\[
\phi_t = \iota(t) + b_1 \tau + \tau^{2}, \quad b_1 \in \mathcal{O}_H.
\]
Thus if $\mathfrak{P}$ is a prime ideal of $\mathcal{O}_H$ and $\bar{\iota} : A \to \mathcal{O}_H/\mathfrak{P}$ is the map induced by the inclusion $A \hookrightarrow \mathcal{O}_H$, we can define the reduction $\overline{\phi}$ of $\phi$,
\[
\overline{\phi}_t = \overline{\iota}(t) + \overline{b}_1 \tau +  \tau^{2}, \quad \overline{b}_1 \in \mathcal{O}_H/\mathfrak{P},\ \overline{b}_1 \equiv b_1 \pmod{\mathfrak{P}}.
\]
which is a Drinfeld module of rank~$2$ over $\mathcal{O}_H/\mathfrak{P}$.  Letting $P_0 \in A$ be the unique monic generator of $\mathfrak{P} \cap A$, we say that the reduction $\overline{\phi}$ is \emph{supersingular} if
\[
\overline{\phi}_{P_0} = \tau^{2 \deg(P_0)} \in (\mathcal{O}_H/\mathfrak{P} )\{\tau\},
\]
which is equivalent to $\overline{\phi}_{P_0}$ being purely inseparable as a map on $\mathbb{G}_a$ (see~\cite[\S 4]{Gekeler91}).  An equivalent condition is simply to check that the coefficient of $\tau^{\deg(P_0)}$ in $\phi_{P_0}$ is divisible by $P_0$, and this can be determined effectively (e.g., see~\cite[Cor~8.2]{EP13}).  Moreover, one can determine supersingular $j$-invariants via recursive identities on Drinfeld modular forms, using a construction of Cornelissen~\cite{Cornelissen99}.

If $\phi$ has supersingular reduction at $P_0 \nmid D$, then the endomorphism algebra of $\overline{\phi}$ is a maximal order in the quaternion algebra $B$~\cite[Thm.~2.9, Thm.~4.3]{Gekeler91}.  Moreover, as this induces an embedding $\mathcal{O}_D \hookrightarrow B$, it follows that $P_0$ must be inert in $\mathcal{O}_D$~\cite[Lem.~2.2]{P2}.

Fix now $P_0 \in A$ monic, irreducible, and inert in $K$.  We set $\F_{P_0}:=A/(P_0)$ and let $\mathcal{D}^{ss}(\overline{\mathbb{F}}_{P_0})$ be the
set of isomorphism classes of supersingular Drinfeld modules over $\overline{\mathbb{F}}_{P_0}$.  There are $n$ such isomorphism classes
\[
\mathcal{D}^{ss}(\overline{\mathbb{F}}_{P_0}) = \{e_1, \ldots, e_n \},
\]
and we further have bijective correspondences
\[
 \{X_1, \ldots, X_n\} \longleftrightarrow \{e_1, \ldots, e_n \} \longleftrightarrow \{I_1, \ldots, I_n\} \longleftrightarrow \{R_1, \ldots, R_n\}
\]
such that $\mathrm{End}(e_i) \cong R_i$ (see~\cite[\S\S 3--4]{Gekeler91}, \cite[Thm.~2.6]{P2}).  Letting $\mathfrak{P}$ be a prime above $P_0$ in $H$, we obtain a reduction map
\[
r_{\mathfrak{P}}:  \mathcal{D}(\mathcal{O}_D) \longrightarrow \mathcal{D}^{ss}(\overline{\mathbb{F}}_{P_0}).
\]

\begin{theorem}\label{reduction}
The reduction map $r_{\mathfrak{P}}$ is surjective if $\lVert D \rVert \gg_{\varepsilon, q} \lVert P_0 \rVert^{4 + \varepsilon}$, where
the implied constant in $\gg_{\varepsilon,q}$ is effective.
\end{theorem}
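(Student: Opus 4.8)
The plan is to identify the image of the reduction map $r_{\mathfrak{P}}$ with a single $\mathrm{Pic}(\mathcal{O}_D)$-orbit of Gross points, and then to force surjectivity from the equidistribution estimate in Theorem~\ref{Grossed} applied with $G_D = \mathrm{Pic}(\mathcal{O}_D)$, after bounding the main term $\mu_{P_0}(e_i)$ below by means of Gekeler's mass formula~\eqref{mass}.

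First I would make explicit the dictionary between CM Drinfeld modules and Gross points. Fix a Drinfeld--Hayes module $\phi \in \mathcal{D}(\mathcal{O}_D)$, defined over $\mathcal{O}_H$. Since $P_0$ is inert in $K$, $\phi$ has supersingular reduction at $\mathfrak{P}$, and the pair $\overline{\phi}$, $\mathrm{End}(\overline{\phi})$ together with the optimal embedding $\mathcal{O}_D \hookrightarrow \mathrm{End}(\overline{\phi})$ induced by the CM structure determines a Gross point $x_{0,D} \in \mathrm{Gr}_{D,P_0}$ lying on the component $X_i$ with $\mathrm{End}(e_i) \cong \mathrm{End}(\overline{\phi}) \cong R_i$; this uses the correspondences recalled in Section~\ref{ssDrinfeld} from \cite{Gekeler91} and \cite{P2}. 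The key compatibility is that the action of $\sigma \in \mathrm{Gal}(H/K) \cong \mathrm{Pic}(\mathcal{O}_D)$ on $\phi$ agrees, after reduction mod $\mathfrak{P}$, with the action $x \mapsto x^{\sigma}$ on Gross points defined via $\hat{f}$ in Section~\ref{GrossPoints}; equivalently, the CM action on $\mathcal{D}(\mathcal{O}_D)$ matches the $\mathrm{Pic}(\mathcal{O}_D)$-action on optimal embeddings through the right orders $R_{i,\sigma}'$ of \eqref{Rprime}. Granting this, $r_{\mathfrak{P}}([\phi^{\sigma}]) = e_i$ exactly when $x_{0,D}^{\sigma} = e_i$ in $\mathcal{S}$, so the image of $r_{\mathfrak{P}}$ is the orbit $\{x_{0,D}^{\sigma} : \sigma \in \mathrm{Pic}(\mathcal{O}_D)\}$, and $\#\{\sigma : r_{\mathfrak{P}}([\phi^{\sigma}]) = e_i\} = N_{\mathrm{Pic}(\mathcal{O}_D),P_0,e_i}$. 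Thus $r_{\mathfrak{P}}$ is surjective if and only if $N_{\mathrm{Pic}(\mathcal{O}_D),P_0,e_i} \geq 1$ for every $i$.

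Next I would run the estimate. Apply Theorem~\ref{Grossed} with $G_D = \mathrm{Pic}(\mathcal{O}_D)$, so that $[\mathrm{Pic}(\mathcal{O}_D):G_D] = 1$ and $|G_D| = h(D)$, giving
\[
\frac{N_{\mathrm{Pic}(\mathcal{O}_D),P_0,e_i}}{h(D)} = \mu_{P_0}(e_i) + O_{\varepsilon}\!\left(q^{5/4}\lVert P_0\rVert^{\varepsilon}\lVert D\rVert^{-1/4+\varepsilon}\right).
\]
Since each $w_i \in \{1, q+1\}$ and, by~\eqref{mass}, $\sum_{j=1}^{n} w_j^{-1} = (\lVert P_0\rVert - 1)/(q^2-1)$, we obtain
\[
\mu_{P_0}(e_i) = \frac{w_i^{-1}}{\sum_{j} w_j^{-1}} \geq \frac{1/(q+1)}{(\lVert P_0\rVert - 1)/(q^2-1)} = \frac{q-1}{\lVert P_0\rVert - 1} \geq \frac{q-1}{\lVert P_0\rVert}.
\]
Hence $N_{\mathrm{Pic}(\mathcal{O}_D),P_0,e_i} \geq h(D)\bigl((q-1)\lVert P_0\rVert^{-1} - C_{\varepsilon}\,q^{5/4}\lVert P_0\rVert^{\varepsilon}\lVert D\rVert^{-1/4+\varepsilon}\bigr)$, and the right-hand side is positive as soon as $\lVert D\rVert^{1/4-\varepsilon} > C_{\varepsilon}(q-1)^{-1} q^{5/4}\lVert P_0\rVert^{1+\varepsilon}$, i.e.\ $\lVert D\rVert \gg_{\varepsilon,q} \lVert P_0\rVert^{(4+4\varepsilon)/(1-4\varepsilon)}$. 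Letting $\varepsilon \to 0^{+}$ and relabeling, for each $\varepsilon > 0$ there is an effective constant, depending only on $\varepsilon$ and $q$, such that $\lVert D\rVert \gg_{\varepsilon,q} \lVert P_0\rVert^{4+\varepsilon}$ forces $N_{\mathrm{Pic}(\mathcal{O}_D),P_0,e_i} \geq 1$ (it is a nonnegative integer) for all $i$, so $r_{\mathfrak{P}}$ is surjective; effectivity is inherited from Theorem~\ref{Grossed}.

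I expect the main obstacle to be the first step: carefully pinning down the compatibility between the arithmetic of complex multiplication for Drinfeld--Hayes modules (the Galois/CM action and its reduction at $\mathfrak{P}$) and the combinatorial $\mathrm{Pic}(\mathcal{O}_D)$-action on Gross points through the right orders $R_{i,\sigma}'$, so that the image of $r_{\mathfrak{P}}$ is literally the orbit counted by $N_{\mathrm{Pic}(\mathcal{O}_D),P_0,e_i}$. Once this reciprocity-type statement is in place, the analytic input from Theorem~\ref{Grossed} and the bookkeeping with the mass formula are routine.
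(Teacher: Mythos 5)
Your quantitative half is exactly the paper's argument, worked out a bit more explicitly: take $G_D=\mathrm{Pic}(\mathcal{O}_D)$ in Theorem~\ref{Grossed} (equivalently \eqref{keyidentity} with $\eta=0$), bound $\mu_{P_0}(e_i)$ below by $\frac{q-1}{\lVert P_0\rVert-1}$ using Gekeler's mass formula \eqref{mass} together with $w_i\in\{1,q+1\}$, and conclude $N_{\mathrm{Pic}(\mathcal{O}_D),P_0,e_i}\geq 1$ for every $i$ once $\lVert D\rVert\gg_{\varepsilon,q}\lVert P_0\rVert^{4+\varepsilon}$; your exponent bookkeeping $(4+4\varepsilon)/(1-4\varepsilon)\rightsquigarrow 4+\varepsilon$ is correct and effectivity is inherited as you say.

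The genuine gap is the step you yourself flag and then dispatch with ``granting this.'' The identification of $r_{\mathfrak{P}}(\mathcal{D}(\mathcal{O}_D))$ with the $\mathrm{Pic}(\mathcal{O}_D)$-orbit of a Gross point, compatibly with the two group actions, is not a routine unwinding of definitions: it is the entire content of the paper's Section~2, and without it the inequality $N_{\mathrm{Pic}(\mathcal{O}_D),P_0,e_i}\geq 1$ says nothing about surjectivity of $r_{\mathfrak{P}}$. Two things have to be proved. First, that the embedding $f_{\mathfrak{P},\phi}:\mathcal{O}_D\hookrightarrow R_e$ obtained by reducing endomorphisms is \emph{optimal}; one inclusion is automatic, the other uses that $\phi$ is a sign-normalized Drinfeld--Hayes module. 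Second, and this is the crux, the equivariance $f_{\mathfrak{P},\phi}^{\sigma}=f_{\mathfrak{P},\phi^{\sigma}}$: the $\mathrm{Pic}(\mathcal{O}_D)$-action on Gross points is defined adelically, via $x\mapsto x_a=(g\hat{f}(a),y)$ and the right orders $R'_{i,\mathfrak{a}}$ of \eqref{Rprime}, whereas the Galois action on $\phi$ is Hayes's $\phi\mapsto\phi^{\mathfrak{a}}$, realized by the isogeny $\mu_{\mathfrak{a}}$ (the monic generator of $H\{\tau\}\cdot\phi(\mathfrak{a})$). Matching the two after reduction mod $\mathfrak{P}$ requires showing that $\mu_{\mathfrak{a}}$ reduces to the monic generator $\nu_{\mathfrak{a}}$ of $\overline{\F}_{P_0}\{\tau\}\cdot R_e\,e(\mathfrak{a})$ --- the paper proves this by comparing $\deg_\tau$ of both sides via the $\mathfrak{a}$-torsion of $\phi$ and of $e$ --- and then invoking J.-K.\ Yu's theorem that $\mathrm{End}(e^{I})$ is the right order of the left ideal $I=R_e\,e(\mathfrak{a})$, so that $r_{\mathfrak{P}}(\phi^{\sigma})$ indeed lands on the component attached to $R'_{e,\sigma}$. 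Correctly naming this compatibility as the main obstacle is not the same as establishing it, so as written your argument is incomplete precisely at its arithmetic core; the analytic input and the mass-formula bookkeeping are, as you predict, the easy part.
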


\begin{remark}
Other aspects of CM-liftings of supersingular Drinfeld modules have been studied previously.  Schweizer~\cite[Prop.~13]{S} investigated how Drinfeld modules with CM by the order of conductor $P_0$ in $\FF_{q^2}[t]$ are in bijection with supersingular Drinfeld modules modulo $P_0$.  Cojocaru and Papikian~\cite[\S 5.2]{CP} have shown how to perform CM-liftings for supersingular Drinfeld modules of arbitrary rank.  However, in these cases one varies by orders in a fixed imaginary extension of $\mathbb{F}_q(t)$ rather than by maximal orders in varying imaginary quadratic extensions of $\mathbb{F}_q(t)$ as in Theorem~\ref{reduction}.
\end{remark}

\begin{remark} Liu, Young, and the second  author \cite[Cor. 1.3]{LMY} proved a result similar to Theorem \ref{reduction} for the reduction map from CM elliptic curves to
supersingular elliptic curves.
\end{remark}

\section{Deducing Theorem~\ref{reduction} from Theorem \ref{Grossed}}

In this section we show how Theorem~\ref{reduction} follows from Theorem~\ref{Grossed} and Corollary~\ref{grossed}.
To do this we show that the image $r_{\mathfrak{P}}(\mathcal{D}(\mathcal{O}_D))$ can be identified with the set of Gross points of discriminant $D$.
Then since Corollary~\ref{grossed} implies that this image becomes equidistributed among the classes $\mathcal{D}^{ss}(\overline{\mathbb{F}}_{P_0})$
as $\deg(D) \rightarrow \infty$, every class $e \in \mathcal{D}^{ss}(\overline{\mathbb{F}}_{P_0})$ is hit at least once
if $\deg(D)$ is sufficiently large, in a quantitative sense which can be made precise using \eqref{keyidentity}.

Since each class in $\mathcal{D}(\mathcal{O}_D)$ is represented by a sign-normalized Drinfeld-Hayes module~$\phi$, if
$r_{\mathfrak{P}}(\phi)=e$, then one obtains an embedding of the corresponding endomorphism rings,
\begin{equation} \label{fPphidef}
f_{\mathfrak{P},\phi}: \mathrm{End}_H(\phi) \cong \mathcal{O}_D \hookrightarrow \mathrm{End}_{\overline{\mathbb{F}}_{P_0}}(e) \cong R_{e}.
\end{equation}
The induced embedding $f_{\mathfrak{P},\phi} : K \hookrightarrow B$ is necessarily optimal.  Indeed, this amounts to having the equality of subalgebras of~$B$,
 \[
f_{\mathfrak{P},\phi}(K) \cap g_e^{-1}\hat{R}g_e = f_{\mathfrak{P},\phi}(\mathcal{O}_D),
\]
where $g_e$ is the coset representative in $\hat{R}^{\times}\backslash \hat{B}^{\times}$ corresponding to the ideal $I_e$; the left-hand side is always contained in the right, and by construction via sign-normalized modules, the right-hand side is contained in the left.  We note that the optimality of embeddings in the context of CM elliptic curves was proved by Bertolini and Darmon \cite[Prop.~4.1]{BD}.

Now recall that the $R_e^{\times}$-conjugacy class of the optimal embedding
$f_{\mathfrak{P},\phi}$ corresponds to a Gross point $x_{\mathfrak{P}, \phi}=(g_e, f_{\mathfrak{P},\phi})$ of discriminant $D$, and using the $\mathrm{Pic}(\mathcal{O}_D)$-action on Gross points described \S\ref{GrossPoints},
the $(R_{e,\sigma})^{\times}$-conjugacy class of the optimal embedding $f_{\mathfrak{P},\phi}^{\sigma}$ for $\sigma \in \mathrm{Pic}(\mathcal{O}_D)$ corresponds to the Gross point
$x_{\mathfrak{P}, \phi}^{\sigma}=(g_{e,\sigma}, f_{\mathfrak{P},\phi}^{\sigma})$ of discriminant $D$. We are now led to the following crucial equivariance result.

\begin{proposition}
Given a sign-normalized Drinfeld-Hayes module $\phi \in \mathcal{D}(\mathcal{O}_D)$, we have $f_{\mathfrak{P},\phi}^{\sigma}=f_{\mathfrak{P},\phi^{\sigma}}$ for $\sigma \in \mathrm{Pic}(\mathcal{O}_D)$.
\end{proposition}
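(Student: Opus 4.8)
The plan is to trace through both sides of the claimed equality by unwinding the definitions of the two $\mathrm{Pic}(\mathcal{O}_D)$-actions — the one on Gross points (equivalently, optimal embeddings) from \S\ref{GrossPoints}, and the Galois/Hayes action on sign-normalized Drinfeld-Hayes modules — and checking that they are carried to one another by the reduction-and-endomorphism construction $\phi \mapsto (e, f_{\mathfrak{P},\phi})$. Concretely, fix $\sigma \in \mathrm{Pic}(\mathcal{O}_D)$, represented by an ideal $\a = K \cap a\hat{\mathcal{O}}_D$ for some idele $a \in \hat{K}^{\times}$. On the Drinfeld side, the action $\phi \mapsto \phi^{\sigma}$ is, by the theory of complex multiplication (Hayes theory, \cite[Ch.~7]{Goss}, \cite{Hayes79}), realized analytically by quotienting the lattice: $\phi^{\sigma}$ corresponds to the $\mathcal{O}_D$-module $\a^{-1}$ acting on the old lattice, and there is a canonical isogeny $u_{\a} : \phi \to \phi^{\sigma}$ defined over $H$ whose kernel is the $\a$-torsion. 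On the quaternionic side, the $\sigma$-action replaces $R_e$ by the right order $R_{e,\sigma}' = R'_{e,\a}$ of the left ideal $R_e\a$ and conjugates the embedding accordingly, and correspondingly replaces the coset representative $g_e$ by $g_{e,\sigma}$.

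The key steps, in order, are: (1) Recall from \cite[Thm.~4.3]{Gekeler91} (resp.\ \cite[Thm.~2.6]{P2}) that the bijection $\mathcal{D}^{ss}(\overline{\F}_{P_0}) \leftrightarrow \{I_i\}$ is $\mathrm{Pic}(\mathcal{O}_D)$-equivariant in the appropriate sense: reducing the isogeny $u_{\a} : \phi \to \phi^{\sigma}$ modulo $\mathfrak{P}$ gives an isogeny $\bar u_{\a} : e \to e'$ of supersingular Drinfeld modules, and the ideal $I_{e'}$ is related to $I_e$ precisely by $I_{e'} \sim I_e \cdot \a$ (as left $R_e$-ideals/right $R$-ideals), so that $e' = r_{\mathfrak{P}}(\phi^{\sigma})$ sits on the component indexed by $g_{e,\sigma}$. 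This identifies the $g$-components of $x_{\mathfrak{P},\phi}^{\sigma}$ and $x_{\mathfrak{P},\phi^{\sigma}}$. (2) For the embedding components, observe that $\mathrm{End}_{\overline{\F}_{P_0}}(e') \cong R'_{e,\a}$ via the rule $\beta \mapsto \bar u_{\a}^{-1} \circ \beta \circ \bar u_{\a}$ (in the isogeny category), and that the endomorphism of $e$ coming from $\alpha \in \mathcal{O}_D$ under $f_{\mathfrak{P},\phi}$, when transported through $\bar u_{\a}$, is exactly the endomorphism of $e'$ that $f_{\mathfrak{P},\phi^{\sigma}}$ assigns to $\alpha$ — because $u_{\a}$ intertwines the CM-actions of $\mathcal{O}_D$ on $\phi$ and $\phi^{\sigma}$ over $H$, and reduction mod $\mathfrak{P}$ is a functor. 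This says that $f_{\mathfrak{P},\phi^{\sigma}}$ equals $f_{\mathfrak{P},\phi}$ conjugated by the element of $\hat B^{\times}$ implementing $\a$, which is precisely the definition of $f_{\mathfrak{P},\phi}^{\sigma}$ given in \S\ref{GrossPoints} via the induced map $\hat f : \hat K^{\times} \to \hat B^{\times}$. (3) Combine (1) and (2): both components of $x_{\mathfrak{P},\phi^{\sigma}}$ and of $x_{\mathfrak{P},\phi}^{\sigma}$ agree modulo the relevant unit groups, giving the equality of Gross points, hence of optimal embeddings, hence $f_{\mathfrak{P},\phi}^{\sigma} = f_{\mathfrak{P},\phi^{\sigma}}$.

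The main obstacle I expect is bookkeeping the compatibility in step (1)–(2) between three different incarnations of the same ideal $\a$: as an $\mathcal{O}_D$-module controlling the CM-isogeny upstairs over $H$, as a left $R_e$-ideal controlling which supersingular class one lands in, and as an idele $a \in \hat K^{\times}$ pushed into $\hat B^{\times}$. Making the directions (is it $\a$ or $\a^{-1}$?) and the left/right conventions consistent with the action $(g,f)\mapsto(gb,b^{-1}fb)$ and with Hayes' normalization of the CM action requires care; the cleanest route is probably to work adelically throughout, identifying $\mathrm{End}(\phi)\otimes\hat A$ inside $\hat B$ via $\hat f$ and checking that reduction mod $\mathfrak{P}$ sends the $\a$-multiplication isogeny to right-translation by $\hat f(a)$ on the coset space, which is essentially the statement that the Serre–Tate/Gekeler description of the supersingular locus is $\hat K^{\times}$-equivariant. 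Once that adelic compatibility is pinned down, the proposition is formal.
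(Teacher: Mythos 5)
Your overall route --- realize $\sigma$ by an ideal $\mathfrak{a}\subseteq\mathcal{O}_D$, reduce the Hayes/CM isogeny $\phi\to\phi^{\sigma}$ modulo $\mathfrak{P}$, and transport the CM action through it so as to land in the right order of a left $R_e$-ideal --- is the same route the paper takes (modeled on Bertolini--Darmon). But your step (1) is where the entire content lies, and as written it begs the question. Gekeler's Thm.~4.3 and Papikian's Thm.~2.6 give the bijections between supersingular classes, ideal classes $\{I_i\}$ and maximal orders $\{R_i\}$, together with $\mathrm{End}(e_i)\cong R_i$; they do \emph{not} assert any $\mathrm{Pic}(\mathcal{O}_D)$-equivariance of the reduction map. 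That equivariance is exactly what the proposition claims, so writing ``the ideal $I_{e'}$ is related to $I_e$ precisely by $I_{e'}\sim I_e\cdot\mathfrak{a}$'' and, at the end, that ``once that adelic compatibility is pinned down, the proposition is formal'' defers precisely the point that needs an argument; steps (2)--(3) are then formal transport and do not close the gap.

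What the paper does to close it: with $\mu_{\mathfrak{a}}$ the monic generator of $H\{\tau\}\cdot\phi(\mathfrak{a})$ (which lies in $\mathcal{O}_H\{\tau\}$, so reduction mod $\mathfrak{P}$ makes sense), it forms the left ideal $I=R_e\cdot e(\mathfrak{a})$ and the monic generator $\nu_{\mathfrak{a}}$ of $\overline{\F}\{\tau\}\cdot I$, and invokes J.-K.\ Yu's results to get a Drinfeld module $e^{I}$ with $\nu_{\mathfrak{a}}\colon e\to e^{I}$ and $\mathrm{End}_{\overline{\F}}(e^{I})=R'_{e,\sigma}$, the right order of $I$. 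The crux is the identity $\overline{\mu}_{\mathfrak{a}}=\nu_{\mathfrak{a}}$, proved by a degree count: $\deg_{\tau}\mu_{\mathfrak{a}}=\dim_{\F_q}(\mathcal{O}_D/\mathfrak{a})$ since its zeros are the $\mathfrak{a}$-torsion of $\phi$; every element of $I$ kills the $\mathfrak{a}$-torsion of $e$, so $\deg_{\tau}\nu_{\mathfrak{a}}\geq\dim_{\F_q}(\mathcal{O}_D/\mathfrak{a})$; and $\overline{\mu}_{\mathfrak{a}}$ is a nonzero element of $\overline{\F}\{\tau\}\cdot\nu_{\mathfrak{a}}$, forcing $\overline{\mu}_{\mathfrak{a}}=\nu_{\mathfrak{a}}$. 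This identification of the reduced Hayes isogeny with Yu's ideal-theoretic isogeny is exactly the ``Serre--Tate/Gekeler equivariance'' you postponed; you must either supply this computation or write out the adelic argument in full before the conclusion $f_{\mathfrak{P},\phi}^{\sigma}=f_{\mathfrak{P},\phi^{\sigma}}$ is justified.
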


\begin{proof}
Bertolini and Darmon \cite[Lem.~4.2]{BD} proved this result in the context of elliptic curves.  We first recall the definition of $\phi^{\sigma}$: by identifying $\sigma$ with an element of $\mathrm{Gal}(H/K)$, if $\phi_a = a + b_1 \tau + \cdots + b_{\ell} \tau^\ell$, then
\[
  \phi^{\sigma}_a := a + b_1^{\sigma} \tau + \cdots + b_{\ell}^{\sigma} \tau^{\ell}.
\]
It is clear that $\phi^{\sigma}$ is also a sign-normalized Drinfeld-Hayes module defined over $\mathcal{O}_H$.

The definition of $\phi^{\sigma}$ is compatible with an action of $\mathrm{Pic}(\mathcal{O}_D)$ in the following way. As originally defined by Hayes~\cite[\S 4.9]{Goss}, \cite[\S\S 3--5]{Hayes79}, for a fixed integral ideal $\mathfrak{a} \subseteq \mathcal{O}_D$ we let
\[
  \mu_{\mathfrak{a}} := \textup{unique monic generator of the left ideal $H\{\tau\} \cdot \phi(\mathfrak{a})$ of $H\{\tau\}$,}
\]
which is well-defined since $H\{\tau\}$ is a left principal ideal domain.  Then there is a unique sign-normalized Drinfeld module $\phi^{\mathfrak{a}}$ such that $\mu_{\mathfrak{a}} : \phi \to \phi^{\mathfrak{a}}$ is a morphism, and furthermore $\mu_{\mathfrak{a}} \in \mathcal{O}_H\{\tau\}$ by~\cite[Prop.~7.5]{Hayes79}.  If the class of $\mathfrak{a}$ in $\mathrm{Pic}(\mathcal{O}_D)$ corresponds to the Galois element $\sigma \in \Gal(H/K)$ via class field theory, then by~\cite[Prop.~8.1]{Hayes79},
\[
  \phi^{\mathfrak{a}}_a = \phi^{\sigma}_a, \quad \forall\, a \in A.
\]

Now suppose that $r_{\mathfrak{P}}(\phi) = e$.  For simplicity, we will write $\mathbb{F} = \mathbb{F}_{P_0}$, and we will assume that $R_e = \mathrm{End}_{\overline{\F}}(e) \subseteq \F\{ \tau\}$.  Thus the embedding $f_{\mathfrak{P},\phi} : \mathcal{O}_D \hookrightarrow R_e$ defined in~\eqref{fPphidef} takes values in $\F\{ \tau\}$. If we fix again an integral ideal $\mathfrak{a} \subseteq \mathcal{O}_D$, we take
\[
  I = R_e \cdot f_{\mathfrak{P},\phi}(\mathfrak{a}) = R_e \cdot e(\mathfrak{a}),
\]
which is a left ideal of $R_e$.  We let
\[
  \nu_{\mathfrak{a}} := \textup{unique monic generator of the left ideal $\overline{\F} \{\tau\} \cdot I$ of $\overline{\F} \{\tau\}$,}
\]
and by J.-K.\ Yu~\cite[\S 2]{YuJK}, there is a unique Drinfeld module $e^{I}$ over $\overline{\F}$ such that $\nu_{\mathfrak{a}} : e \to e^{I}$ is a morphism.  Moreover, by \cite[Prop.~2]{YuJK} we have
\[
  \mathrm{End}_{\overline{\F}} (e^{I}) = \{ b \in B : I b \subseteq I \} = R_{e,\mathfrak{a}}' = R_{e,\sigma}',
\]
the right order of $I$ in $B$, where $R_{e,\mathfrak{a}}'$ is defined in~\eqref{Rprime}.

We claim that for our fixed ideal $\mathfrak{a} \subseteq \mathcal{O}_D$, we have
\[
  \nu_{\mathfrak{a}} \equiv \mu_{\mathfrak{a}} \pmod{\mathfrak{P}},
\]
i.e., $\nu_{\mathfrak{a}}$ is obtained from $\mu_{\mathfrak{a}}$ by reducing each of its coefficients modulo $\mathfrak{P}$.  Letting $\overline{\mu}_{\mathfrak{a}}$ denote reduction of $\mu_{\mathfrak{a}}$ modulo $\mathfrak{P}$, we observe that $\overline{\mu}_{\mathfrak{a}} \in e(\mathfrak{a}) \subseteq I$, since $r_{\mathfrak{P}}(\phi) = e$.  Now the zeros of $\mu_{\mathfrak{a}}$, as a function on $\overline{H}$, comprise the $\mathfrak{a}$-torsion of $\phi$ (see~\cite[\S 4.9]{Goss}), and as such
\[
  \deg_{\tau} (\mu_{\mathfrak{a}}) = \dim_{\F_q}(\mathcal{O}_D/\mathfrak{a}).
\]
By the same token, every element of $I$ must vanish at the $\mathfrak{a}$-torsion of $e$, and so
\[
  \deg_{\tau} (\nu_{\mathfrak{a}}) \geq \dim_{\F_q}(\mathcal{O}_D/\mathfrak{a}).
\]
Since $\overline{\mu}_{\mathfrak{a}} \neq 0$, it follows that $\overline{\mu}_{\mathfrak{a}} = \nu_{\mathfrak{a}}$.  As a consequence, we see that
\[
r_{\mathfrak{P}}(\phi^{\mathfrak{a}}) = r_{\mathfrak{P}}(\phi^{\sigma}) = e^{I}.
\]
Therefore,
\[
  f_{\mathfrak{P},\phi^{\sigma}} : \mathcal{O}_D \hookrightarrow \mathrm{End}_{\overline{\F}}(e^I) = R_{e,\sigma}',
\]
but this coincides with $f_{\mathfrak{P},\phi}^{\sigma}$ defined in \S\ref{GrossPoints}.
\end{proof}

\begin{proof}[Proof of Theorem \ref{reduction}]
By the preceding discussions, we have bijective correspondences
\begin{align*}
r_{\mathfrak{P}}(\mathcal{D}(\mathcal{O}_D)) &\longleftrightarrow \{r_{\mathfrak{P}}(\phi^{\sigma}):~\sigma \in \mathrm{Pic}(\mathcal{O}_D)\}\\
& \longleftrightarrow \{f_{\mathfrak{P},\phi^{\sigma}}:~\sigma \in \mathrm{Pic}(\mathcal{O}_D)\} \\
& \longleftrightarrow  \{f_{\mathfrak{P},\phi}^{\sigma}:~\sigma \in \mathrm{Pic}(\mathcal{O}_D)\} \\
& \longleftrightarrow  \{x_{\mathfrak{P},\phi}^{\sigma}:~\sigma \in \mathrm{Pic}(\mathcal{O}_D)\}.
\end{align*}
Since the components $\{X_1, \ldots, X_n\}$ of the definite Shimura curve $X_{P_0}$, and hence the divisor classes $\mathcal{S}$, are identified with the isomorphism classes of supersingular Drinfeld modules
$\{e_1, \ldots, e_n\}$, and by Corollary \ref{grossed} the $\mathrm{Pic}(\mathcal{O}_D)$-orbit $\{x_{\mathfrak{P},\phi}^{\sigma}: \sigma \in \mathrm{Pic}(\mathcal{O}_D)\}$
of the Gross point $x_{\mathfrak{P},\phi}$ becomes equidistributed with respect to the probability measure $\mu_{P_0}$ on $\mathcal{S}$
as $\deg(D)\rightarrow \infty$,
it follows from the above correspondences that the reduction map $r_{\mathfrak{P}}$ is surjective if $\deg(D)$ is sufficiently large. More precisely,
since
\[
\lVert D \rVert^{1/2 -\varepsilon} \ll_{q, \varepsilon} h(D) \ll_{q, \varepsilon} \lVert D \rVert^{1/2 + \varepsilon}
\]
and $\mu_{P_0}(e_i) \asymp \lVert P_0 \rVert^{-1}$ (see \eqref{mass}), then by \eqref{keyidentity} with $\eta=0$
we have $N_{\mathrm{Pic}(\mathcal{O}_D),P_0, e_i} > 0$ (and hence that $r_{\mathfrak{P}}$ is surjective) if $\lVert D\rVert \gg_{\varepsilon, q} \lVert P_0 \rVert^{4 + \varepsilon}$, and we are done.
\end{proof}

\section{Rankin-Selberg $L$--functions}

In this section we deduce some facts we will need on automorphic forms of Drinfeld-type and Rankin-Selberg $L$--functions from
 \cite[\S\S 3--4]{CWY}.

Let $M(P_0)$ (resp.~$S(P_0)$) be the space of automorphic forms (resp.~cusp forms) of Drinfeld type for $\Gamma_0(P_0)$ with
Petersson inner product $\langle f, f\rangle_{P_0}$. Our assumption that $\textrm{deg}(P_0) \geq 3$ ensures that the dimension
of the space $S(P_0)$ is positive (see \eqref{dimensionformula}). For
each place $v$ of $k$, let $T_v$ be the Hecke operator on $M(P_0)$ corresponding to $v$.
Let $f \in M(P_0)$ be a normalized Hecke eigenform, and let $\lambda_v(f) \in \R$
be the Hecke eigenvalue corresponding to $T_v$. Then, we have $\lambda_\infty(f)=1$, and for each finite place $v$ of
$k$, we have
\begin{itemize}
\item $|\lambda_v(f)| \leq 2 q_{v}^{1/2}$ if $v \neq P_0$,
\item $\lambda_{P_0}(f)^2=1$,
\end{itemize}
where $q_v$ is the cardinality of the residue field $\mathbb{F}_v$ of $k_v$.
The first inequality is the Ramanujan bound, and the second inequality holds since
$f$ has trivial central character (under our assumptions, any form $f \in M(P_0)$ has trivial central character $w_f$, since
$w_f$ can be identified with a character on the ideal class group $\mathrm{Pic}(A)$, and the latter group is trivial; see the last
paragraph of \cite[\S 3.1]{CWY}).

Let $f \in S(P_0)$ be a normalized newform and $\chi$ be a character of
$\mathrm{Pic}(\mathcal{O}_D)$. Define the Rankin-Selberg $L$--function
\[
L(f \times \chi, s):=\prod_{v}L_v(f \times \chi,s),
\]
where the product is over all places $v$ of $k$, and the local factors are
given by
\[
L_v(f \times \chi,s):=\prod_{1 \leq j, k \leq 2}(1-\alpha_v^{(j)}(f)c_v^{(k)}(\chi)q_v^{-(s+\frac{1}{2})})^{-1},
\]
where the numbers
$\alpha_v^{(j)}(f), c_v^{(k)}(\chi)$ are determined as follows:
\begin{itemize}
\item when $v \nmid P_0 \infty$, $\alpha_v^{(1)}(f)$ and $\alpha_v^{(2)}(f)$ are the two complex conjugate roots
of the quadratic polynomial
\[
X^2-\lambda_v(f)X+q_v;
\]
\item when $v \mid P_0 \infty$, we have $\alpha_v^{(1)}(f)=\lambda_v(f)$ and $\alpha_v^{(2)}(f)=0$;
\item when $v=w_1w_2$ splits in $K/k$, $c_v^{(k)}(\chi)=\chi(w_k)$ for $k=1,2$;
\item when $v$ is inert in $K/k$, $c_v^{(k)}(\chi)=(-1)^{k}\sqrt{\chi(w)}$ where $w\mid v$, for $k=1,2$;
\item when $v$ is ramified in $K/k$, $c_v^{(1)}(\chi)=\chi(w)$ where $w\mid v$, and $c_v^{(2)}(\chi)=0$.
\end{itemize}
Note that we have normalized the $L$--function $L(f \times \chi,s)$ so that the central value occurs at $s=1/2$.

Under our assumptions, the place $v=\infty$ is ramified in $K/k$, hence using that $\lambda_{\infty}(f)=1$, $\chi(K_{\infty}^{\times})=1$,
and $q_{\infty}=q$, we get
\[
L_{\infty}(f \times \chi,s)=(1-q^{-(s+\frac{1}{2})})^{-1}.
\]
Similarly, under our assumptions, the place $v=P_0$ is inert in $K/k$, hence using that $\lambda_{P_0}(f)^2=1$, we get
\[
L_{P_0}(f \times \chi,s)=(1-\chi(w_{P_0})q_{P_0}^{-2(s+\frac{1}{2})})^{-1}
\]
where $w_{P_0} \mid P_0$. Therefore, we have the Euler product
\begin{multline}\label{euler}
L(f \times \chi, s)
=(1-q^{-(s+\frac{1}{2})})^{-1}(1-\chi(w_{P_0})q_{P_0}^{-2(s+\frac{1}{2})})^{-1} \\
{} \times \prod_{v \nmid P_{0}\infty}\prod_{1 \leq j, k \leq 2}(1-\alpha_v^{(j)}(f)c_v^{(k)}(\chi)q_v^{-(s+\frac{1}{2})})^{-1}.
\end{multline}

\section{Bounding the central value $L(f \times \chi, 1/2)$}

In this section we will prove the following Lindel\"of-type bound for the
central value $L(f \times \chi, 1/2)$.

\begin{theorem}\label{lindelof}
We have
\[
|L(f \times \chi, 1/2)|  \leq \exp\left( \frac{3m}{2\log_q(m/2)} + O(\log(\log(\log_q(m/2) + 1))q^{1/2}m^{1/2}) \right)
\]
where $m \in \Z^{+}$ satisfies $m=O(\deg(P_0) + \deg(D))$.
\end{theorem}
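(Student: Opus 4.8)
The plan is to bound $|L(f\times\chi,1/2)|$ by combining the Euler product representation \eqref{euler} with a dyadic decomposition à la Rankin--Selberg: the short initial segment of the product is estimated by trivial multiplicativity and the Ramanujan bound $|\lambda_v(f)| \leq 2q_v^{1/2}$, while the tail is controlled by the functional equation and the Riemann hypothesis (Deligne) for $L(f\times\chi,s)$ over $\F_q(t)$, which forces the completed $L$-function to be a polynomial in $q^{-s}$ whose degree is $O(\deg(P_0)+\deg(D))$ (this is where the parameter $m$ enters). First I would record that $L(f\times\chi,s)$ is entire, has a functional equation relating $s \mapsto 1-s$, and — being the $L$-function of a Galois representation of bounded conductor on the curve — equals $\prod_{i}(1-\beta_i q^{-s})$ with $|\beta_i| = q^{1/2+\epsilon_i}$ under RH; the number of factors is the "analytic conductor" exponent $m = O(\deg(P_0)+\deg(D))$. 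This reduces the problem to: given a polynomial $\sum_{j=0}^{m} a_j X^j$ with prescribed root sizes, bound its value at the central point.

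Next I would run the standard convexity-breaking argument in its explicit form. Writing $L(f\times\chi,s) = \sum_{\deg(\mathfrak n) \geq 0} b_{\mathfrak n}\, \op{N}(\mathfrak n)^{-s}$ as a Dirichlet series over effective divisors, the Ramanujan bound gives $|b_{\mathfrak n}| \leq d(\mathfrak n)\op{N}(\mathfrak n)^{1/2}$ where $d(\cdot)$ is a divisor-counting function; the point is that the coefficients of the $s$-normalized series are bounded by a divisor function, so summing the series up to $\op{N}(\mathfrak n) = q^X$ contributes at most $\exp(O(X/\log_q X))$ by the function-field analogue of the maximal order of the divisor function (Wigert/Ramanujan: $\log d(\mathfrak n) \ll \deg(\mathfrak n)\log q/\log(\deg(\mathfrak n)\log q)$). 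Meanwhile the functional equation lets me replace the range $\op{N}(\mathfrak n) > q^X$ by its dual, of total length $m - X$. Balancing the two ranges, the optimal choice is $X \approx m/2$, which produces the main term $\tfrac{3m}{2\log_q(m/2)}$: the factor $3/2$ comes from the $1/2$-power in the Ramanujan bound contributing an extra geometric-series factor $\exp(O(X/2))$... actually more carefully, from the Rankin--Selberg local factors being degree $4$ in each $q_v^{-s}$ so that the relevant "divisor function" is $d_3$ or $d_4$-type, giving the constant $3/2$ rather than $1$; I would track this constant through the Wigert-type estimate $\log d_3(\mathfrak n) \leq (3/2 + o(1))\deg(\mathfrak n)\log q/\log(\deg(\mathfrak n)\log q)$ applied at $\deg(\mathfrak n) \approx m/2$.

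For the error term, I would note that the sub-leading corrections come from three sources: the $o(1)$ in the Wigert estimate (of size $\log\log\log$ over $\log\log$, accounting for the $\log(\log(\log_q(m/2)+1))$ factor), the finitely many "bad" Euler factors at $P_0$ and $\infty$ (each contributing $O(1)$ to the log, hence negligible against $m^{1/2}$), and — most importantly — the boundary terms in the dyadic balancing, where the incomplete geometric series near $\op{N}(\mathfrak n) = q^{m/2}$ contributes a term of size $\exp(O(q^{1/2}m^{1/2}))$ because the coefficient bound there is $d_3(\mathfrak n)q^{m/4}$ against a smoothed count of length $\sqrt{m}$ in the exponent after optimizing. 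The main obstacle I anticipate is making the function-field divisor-function estimate fully explicit with the correct constant $3/2$ and an error of the stated shape: one must count effective divisors of bounded degree with a given number of prime divisors, which in $\F_q[t]$ reduces to a clean combinatorial estimate (partitions of an integer $\leq m/2$ weighted by the number of monic irreducibles of each degree, itself $\sim q^d/d$ by the prime polynomial theorem), but extracting the precise triple-log error requires care with how the count of irreducibles of small degree interacts with the optimization in $X$. Once that estimate is in hand, the rest is bookkeeping with the functional equation and the trivial bounds on the ramified factors.
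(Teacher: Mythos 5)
There is a genuine gap: the engine of your argument (approximate functional equation plus Ramanujan/divisor bounds on the coefficients) can only produce the convexity bound, not the Lindel\"of-type bound of Theorem~\ref{lindelof}. If you expand \eqref{euler} as a Dirichlet series $\sum_n c(n)q^{-ns}$ in the unitary normalization, the coefficient $c(n)$ is a sum over \emph{all} effective divisors of degree $n$ (there are $\asymp q^n$ of them) of quantities bounded by a $d_4$-type divisor function, so $|c(n)|q^{-n/2}$ is of size $q^{n/2}$ times a polynomial in $n$; truncating at $n\approx m/2$ and balancing with the dual sum then gives $|L(f\times\chi,1/2)|\ll q^{m/4+\varepsilon}$, i.e.\ $\exp\bigl(\tfrac{m}{4}\log q+O(\varepsilon m)\bigr)$. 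This is exponentially weaker in $m$ than the claimed $\exp\bigl(\tfrac{3m}{2\log_q(m/2)}+\cdots\bigr)$: the maximal order of the divisor function (your Wigert-type input) is irrelevant here because the sum over the $\asymp q^n$ divisors of degree $n$ swamps it, and the constant $3/2$ cannot be extracted this way. You cite the Riemann hypothesis of Deligne--Drinfeld only to say the $L$-function is a polynomial of degree $m$, but that fact alone (equivalently, the functional equation) is exactly what convexity uses; the content of Theorem~\ref{lindelof} is that one must actually exploit the \emph{location} of the zeros on the line $\mathrm{Re}(s)=1/2$. Your normalization $|b_{\mathfrak n}|\le d(\mathfrak n)\operatorname{N}(\mathfrak n)^{1/2}$ and the claimed boundary term $\exp(O(q^{1/2}m^{1/2}))$ from ``dyadic balancing'' are likewise not derivations but placeholders.

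The paper's proof instead bounds $\log|L(f\times\chi,1/2)|$, following the method of Altu\u{g}--Tsimerman \cite[Thm.~3.3]{AT}: write $L(f\times\chi,s)=a\prod_{k=1}^m(1-\alpha_k q^{1/2-s})$ with $|\alpha_k|=1$ (this is where \cite{De}, \cite{Dr} enter in an essential way), integrate $\tfrac{L'}{L}$ from $1/2$ to a point $s_0=1/2+1/(h\log q)$ with $h=\lceil\log_q(m/2)\rceil$, and compare with a contour-integral (explicit-formula) identity whose arithmetic side is a \emph{short} sum over $n\le h$ of the coefficients $c_{f,K}(n)$ of $\tfrac{L'}{L}$; these are supported on prime powers and bounded via the Ramanujan bound and the count $\#\{v:\deg v=d\}\le q^d$. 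The crucial point is a positivity argument: the zero-sum $F(s_0)$ appears with a favorable sign and can be discarded, which is what converts RH into the saving $m\mapsto m/\log_q(m/2)$ in the main term, while the short sum of length $h$ yields the secondary term $O(\log(\log h)\,q^{h/2})=O(\log(\log(\log_q(m/2)+1))q^{1/2}m^{1/2})$. To repair your proposal you would need to replace the approximate-functional-equation step by such a log-derivative/positivity argument (or an equivalent use of the zeros); no amount of care with divisor-function maximal orders will close the gap.
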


\begin{proof} We adapt the argument in \cite[Thm.~3.3]{AT} used to bound the degree one $L$--function $L(\chi_K,s)$ at $s=1/2$.

By work of Deligne \cite{De} and Drinfeld \cite{Dr}, the $L$--function $L(f \times \chi,s)$ is a polynomial of degree $m=O(\deg(P_0) + \deg(D))$
in $q^{-s}$ and has zeros only on the critical line $\mathrm{Re}(s)=1/2$.
Therefore, we can write
\[
L(f \times \chi, s) = a \prod_{k=1}^{m}(1-\alpha_k q^{\frac{1}{2}-s}) = a q^{-ms}\prod_{k=1}^{m}(q^s - \alpha_k q^{1/2})
\]
for complex numbers $a \neq 0$ and $\alpha_k$ with $|\alpha_k|=1$. Taking logarithmic derivatives yields
\[
\frac{L^{\prime}}{L}(f \times \chi,s) = \log(q)\left( -m + \sum_{k=1}^{m}\frac{1}{1-\alpha_kq^{\frac{1}{2}-s}}\right).
\]

Define
\[
F(s):=\sum_{k=1}^{m}\mathrm{Re}\left(\frac{1}{1-\alpha_kq^{\frac{1}{2}-s}} - \frac{1}{2}\right).
\]
Then for $s \in \R$, we have
\begin{equation}\label{logder}
\frac{L^{\prime}}{L}(f \times \chi,s) = \log(q) \left(-\frac{m}{2} + F(s) \right).
\end{equation}
Let $s_0 \in \R$ be such that $1/2 < s_0 < 1/2 + 1/\log(q)$. Then integrating from $1/2$ to $s_0$ gives

\begin{multline}\label{logdifference}
 \log|L(f \times \chi, 1/2)| - \log|L(f \times \chi, s_0)| \\
 =\frac{m}{2}\log(q)(s_0 -\frac{1}{2})
 - \log(q)\sum_{k=1}^{m} \int_{1/2}^{s_0}\mathrm{Re}\left(\frac{1}{1-\alpha_kq^{\frac{1}{2}-s}} - \frac{1}{2}\right)ds.
\end{multline}

To estimate the second term on the RHS of \eqref{logdifference},
we use the following lemma (see \cite[Lem.~3.1]{AT}).

\begin{lemma}\label{ASLemma1}
Let $\theta$ and $0 < t < 1$ be real numbers. Then
\[
\int_{0}^{t}\mathrm{Re}\left(\frac{1}{1-e^{-x-i\theta}}-\frac{1}{2}\right)dx
\geq 2 \cdot \frac{1+e^{-t}}{1-e^{-t}}\cdot \mathrm{Re}\left(\frac{1}{1-e^{-t-i\theta}} -\frac{1}{2}\right).
\]
\end{lemma}

Since $|\alpha_k|=1$, we can write
\[
\alpha_k q^{\frac{1}{2}-s} = e^{-\log(q)(s - \frac{1}{2})-i\theta_k}
\]
for some $\theta_k \in \R$. Make the change of variables $x=\log(q)(s-1/2)$ to get
\[
\int_{1/2}^{s_0}\mathrm{Re}\left(\frac{1}{1-\alpha_kq^{\frac{1}{2}-s}} - \frac{1}{2}\right)ds
=\frac{1}{\log(q)}\int_{0}^{\log(q)(s_0 -\frac{1}{2})}\mathrm{Re}\left(\frac{1}{1-e^{-x -i\theta_k}} - \frac{1}{2}\right)dx.
\]
By Lemma \ref{ASLemma1} we have
\[
\int_{0}^{\log(q)(s_0 -\frac{1}{2})}\mathrm{Re}\left(\frac{1}{1-e^{-x -i\theta_k}} - \frac{1}{2}\right)dx
\geq 2\cdot \frac{1+q^{\frac{1}{2}-s_0}}{1-q^{\frac{1}{2}-s_0}}\cdot \mathrm{Re}\left(\frac{1}{1-\alpha_kq^{\frac{1}{2}-s_0}} - \frac{1}{2}\right).
\]
Then applying this bound in \eqref{logdifference} gives
\begin{equation}\label{logdifference2}
 \log|L(f \times \chi, 1/2)| - \log|L(f \times \chi, s_0)| \leq \frac{m}{2}\log(q)(s_0 - \frac{1}{2})
- 2\cdot  \frac{1+q^{\frac{1}{2}-s_0}}{1-q^{\frac{1}{2}-s_0}} \cdot F(s_0).
\end{equation}

Define $h:=\lceil \log_q(m/2)\rceil$. For $\mathrm{Re}(s) > 0$, the integral
\[
\frac{1}{2\pi i}\int_{2}^{2 + \frac{2\pi i}{\log(q)}}-\frac{L^{\prime}}{L}(f \times \chi, s+ w)\frac{q^{hw}q^{-w}}{(1-q^{-w})^2}dw
\]
can be computed in two different ways, first by expanding
\[
\frac{L^{\prime}}{L}(f \times \chi, s) = \sum_{n=1}^{\infty} \frac{c_{f,K}(n)}{q^{ns}}
\]
and integrating term by term, and second by analytically continuing to the left and picking up the residues
at the poles. There is a double pole at $w=0$, and simple poles at the values of $w$ for which $q^{s+w}=\alpha_k q^{1/2}$.
This yields the identity
\begin{align*}
-\log(q)^{-2}\sum_{n=1}^{h}\frac{c_{f,K}(n)\log(q^{h-n})}{q^{ns}} & = h \log(q)^{-1}\frac{L^{\prime}}{L}(f \times \chi, s)
+ \log(q)^{-2}\left(\frac{L^{\prime}}{L}(f \times \chi, s) \right)^{\prime}\\
 & \quad + \sum_{k=1}^{m}\frac{(\alpha_k q^{\frac{1}{2}-s})^{h} \alpha_k^{-1} q^{s-\frac{1}{2}}}{(1-\alpha_k^{-1}q^{s-\frac{1}{2}})^2}.
\end{align*}
Integrate from $s_0$ to $\infty$, take real parts, and multiply by $\log(q)^2$ to obtain the identity
\begin{align}\label{logdifference3}
h\log(q)\log|L(f \times \chi, s_0)| & = -\frac{L^{\prime}}{L}(f \times \chi, s_0) + \log(q)^{-1}\sum_{n=1}^{h}\frac{c_{f,K}(n)\log(q^{h-n})}{nq^{ns_0}}\\
& \quad + \log(q)^{2} \sum_{k=1}^{m}
\int_{s_0}^{\infty}\mathrm{Re}\left(\frac{(\alpha_k q^{\frac{1}{2}-s})^{h} \alpha_k^{-1} q^{s-\frac{1}{2}}}{(1-\alpha_k^{-1}q^{s-\frac{1}{2}})^2}\right)ds.\notag
\end{align}

To estimate the third term on the RHS of \eqref{logdifference3}, we use the following lemma (see \cite[Lem.~3.2]{AT}).

\begin{lemma}\label{ASLemma2} For $s > s_0 > 1/2$ we have
\[
\left| \frac{\alpha_k^{-1} q^{s-\frac{1}{2}}}{(1-\alpha_k^{-1}q^{s-\frac{1}{2}})^2}\right| \leq \log(q)^{-1} (s_0 - \frac{1}{2})^{-1}
\mathrm{Re}\left(\frac{1}{1-\alpha_kq^{\frac{1}{2}-s_0}}\right).
\]
\end{lemma}

By Lemma \ref{ASLemma2} we have
\[
\int_{s_0}^{\infty}\mathrm{Re}\left(\frac{(\alpha_k q^{\frac{1}{2}-s})^{h} \alpha_k^{-1} q^{s-\frac{1}{2}}}{(1-\alpha_k^{-1}q^{s-\frac{1}{2}})^2}\right)ds
\leq h^{-1} \log(q)^{-2} (s_0 - \frac{1}{2})^{-1} \mathrm{Re}\left(\frac{1}{1-\alpha_kq^{\frac{1}{2}-s_0}}\right)q^{h(\frac{1}{2}-s_0)}.
\]
Then applying this bound in \eqref{logdifference3} gives
\begin{multline}\label{s_0Bound}
\log|L(f \times \chi, s_0)|  \leq -h^{-1}\log(q)^{-1}\frac{L^{\prime}}{L}(f \times \chi, s_0)
 + h^{-1} \log(q)^{-2}\sum_{n=1}^{h}\frac{c_{f,K}(n)\log(q^{h-n})}{nq^{ns_0}} \\
 {}+ h^{-2}\log(q)^{-1}(s_0 - \frac{1}{2})^{-1} \left(F(s_0) + \frac{m}{2} \right)q^{h(\frac{1}{2}-s_0)}.
\end{multline}

We now apply \eqref{s_0Bound} in \eqref{logdifference2}, then use \eqref{logder} to get
\begin{align*}
\log|L(f \times \chi, 1/2)| & \leq \frac{m}{2}\left(\log(q)(s_0 - \frac{1}{2}) + h^{-1} + h^{-2}\log(q)^{-1}(s_0 - \frac{1}{2})^{-1}q^{h(\frac{1}{2}-s_0)}\right)\\
& \quad + F(s_0)\left((s_0 - \frac{1}{2})^{-1}\frac{q^{h(\frac{1}{2}-s_0)}}{h^2 \log(q)} - 2 \cdot\frac{1+q^{\frac{1}{2}-s_0}}{1-q^{\frac{1}{2}-s_0}} - h^{-1}\right)\\
& \quad +  h^{-1} \log(q)^{-2}\sum_{n=1}^{h}\frac{c_{f,K}(n)\log(q^{h-n})}{nq^{ns_0}}.
\end{align*}
Choose $s_0=1/2 + 1/h\log(q)$. Then
\[
 h^{-2}\log(q)^{-1}(s_0 - \frac{1}{2})^{-1}q^{h(\frac{1}{2}-s_0)} < h^{-1}
\]
and
\[
(s_0 - \frac{1}{2})^{-1}\frac{q^{h(\frac{1}{2}-s_0)}}{h^2 \log(q)} - 2\cdot \frac{1+q^{\frac{1}{2}-s_0}}{1-q^{\frac{1}{2}-s_0}} - h^{-1} < 0.
\]
Since $F(s_0) > 0$, it follows that
\begin{equation}\label{centralbound}
\log|L(f \times \chi, 1/2)| \leq \frac{3m}{2h} +  h^{-1} \log(q)^{-2}\sum_{n=1}^{h}\frac{c_{f,K}(n)\log(q^{h-n})}{nq^{ns_0}}.
\end{equation}

We next bound the coefficients $c_{f,K}(n)$. Taking the logarithmic derivative of the Euler product \eqref{euler} yields
\begin{align*}
\frac{L^{\prime}}{L}(f \times \chi, s) & = -\log(q)\frac{q^{-(s+\frac{1}{2})}}{1-q^{-(s+\frac{1}{2})}} \\
& \quad
- 2\log(q_{P_0})\frac{\chi(w_{P_0})q_{P_0}^{-2(s+\frac{1}{2})}}{1-\chi(w_{P_0})q_{P_0}^{-2(s+\frac{1}{2})}}\\
& \quad - \sum_{v \nmid P_0 \infty}\log(q_v)\sum_{1 \leq j, k \leq 2}
\frac{\alpha_v^{(j)}(f)c_v^{(k)}(\chi)q_v^{-(s+\frac{1}{2})}}{1-\alpha_v^{(j)}(f)c_v^{(k)}(\chi)q_v^{-(s+\frac{1}{2})}}.
\end{align*}
Since the eigenvalues $\lambda_v(f)$ are real, the two complex conjugate
roots $\alpha_v^{(j)}(f)$ of $X^2-\lambda_v(f)X +q_v$ have modulus $|\alpha_v^{(j)}(f)| = q_v^{1/2}$. Hence
\begin{equation}\label{ramanujan}
|\alpha_v^{(j)}(f)c_v^{(k)}(\chi)q_v^{-1/2}| \leq 1,
\end{equation}
so that (say, for $\mathrm{Re}(s) > 0$)
\begin{align*}
\frac{L^{\prime}}{L}(f \times \chi, s) & = -\log(q)\sum_{n=1}^{\infty}(q^{-(s+\frac{1}{2})})^{n} \\
& \quad - 2\log(q_{P_0})\sum_{n=1}^{\infty}(\chi(w_{P_0})q_{P_0}^{-2(s+\frac{1}{2})})^{n} \\
& \quad - \sum_{v \nmid P_0 \infty}\log(q_v)\sum_{1 \leq j, k \leq 2} \sum_{n=1}^{\infty}(\alpha_v^{(j)}(f)c_v^{(k)}(\chi)q_v^{-(s+\frac{1}{2})})^{n}.
\end{align*}
Moreover, since $q_v=q^{\deg(v)}$ where $\deg(v)$ is the degree of the residue field $\mathbb{F}_v$ of $k_v$,
a calculation yields
\begin{align*}
\frac{L^{\prime}}{L}(f \times \chi, s) & = \sum_{n=1}^{\infty}\frac{a(n)}{q^{ns}} + \sum_{n=1}^{\infty}\frac{b(n)}{q^{ns}} + \sum_{n=1}^{\infty}\frac{c(n)}{q^{ns}},
\end{align*}
where $a(n):=-\log(q)q^{-n/2}$,
\[
b(n):=
\begin{cases}
-2\deg(P_0)\log(q)(\chi(w_{P_0})q_{P_0}^{-1})^{n/2\deg(P_0)},  & \textrm{if $2\deg(P_0) \mid n$}\\
0, &  \textrm{if $2\deg(P_0) \nmid n$}
\end{cases}
\]
and
\[
c(n):=-\log(q)\sum_{d\mid n}d\sum_{\substack{v \nmid P_0 \infty\\ \deg(v)=d}}\sum_{1 \leq j, k \leq 2}{(\alpha_v^{(j)}(f)c_v^{(k)}(\chi)q_v^{-\frac{1}{2}})}^{n/d}.
\]

Since $c_{f,K}(n)=a(n)+b(n)+c(n)$, then using \eqref{ramanujan} we estimate
\begin{align*}
|c_{f,K}(n)| &\leq |a(n)| + |b(n)| + |c(n)| \\
& \leq \log(q) + 2\log(q)\deg(P_0) + 4\log(q)\sum_{d\mid n} d \cdot \#\{v \nmid P_0 \infty: \deg(v)=d\}\\
& \leq  \log(q) + 2\log(q)\deg(P_0) + 4\log(q) \sum_{d\mid n} d q^{d}\\
& \leq \log(q)(1 + 2\deg(P_0) + 4\sigma(n)q^{n}),
\end{align*}
where
\[
\sigma(n):=\sum_{d\mid n}d.
\]
We have
\[
\sigma(n) = O(\log(\log(n))n).
\]
Therefore, using that $s_0 > 1/2$, we apply these bounds in \eqref{centralbound} to get
\begin{align*}
\log|L(f \times \chi, 1/2)| & \leq \frac{3m}{2h} +  h^{-1} \log(q)^{-2}\sum_{n=1}^{h}\frac{c_{f,K}(n)\log(q^{h-n})}{nq^{ns_0}}\\
& \leq  \frac{3m}{2h} + O(\log(\log(h))q^{h/2}).
\end{align*}
Finally, since $\log_q(m/2) \leq h \leq  \log_q(m/2) + 1$ (so that $q^{h/2} \leq q^{1/2}(m/2)^{1/2}$), we get
\[
|L(f \times \chi, 1/2)|  \leq \exp\left( \frac{3m}{2\log_q(m/2)} + O(\log(\log(\log_q(m/2) + 1))q^{1/2}m^{1/2}) \right).
\]
This completes the proof.
\end{proof}

\section{The function field analog of Gross's formula}

In this section we study an analog of Gross's formula \cite{G} over rational function fields
due to Papikian \cite{P2}, Wei and Yu \cite{WY}. In particular, we give an alternative expression for this formula which will be useful for
our calculations.

First, we recall some facts from the introduction. The Shimura curve $X_{P_0}$ is the disjoint union of $n$ genus zero curves $X_i$ defined over $k$.
Hence if $\mathrm{Pic}(X_{P_0})$ denotes the Picard group of $X_{P_0}$ and if $e_i$ denotes the class of degree 1 in
$\mathrm{Pic}(X_{P_0})$ corresponding to the component $X_i$, we have
\[
\mathrm{Pic}(X_{P_0})=\mathbb Z e_1 \oplus \cdots \oplus \mathbb Z e_{n}.
\]
Since a Gross point $x \in \mathrm{Gr}_{D,{P_0}}$ lies on a component $X_i$, it
determines a class $e_{x}$ in $\mathrm{Pic}(X_{P_0})$ which for notational convenience we continue to denote by $x$.
We denote the action of $\mathrm{Pic}(\mathcal{O}_D)$ on $\mathrm{Gr}_{D,{P_0}}$ by $x \mapsto x^{\sigma}$ for
$\sigma \in \mathrm{Pic}(\mathcal{O}_D)$.

The Gross height pairing
\[
\langle ~ , ~ \rangle : \mathrm{Pic}(X_{P_0}) \times \mathrm{Pic}(X_{P_0}) \rightarrow \Z
\]
is defined on generators by $\langle e_i, e_j \rangle = w_i \delta_{ij}$ and extended bi-additively to $\mathrm{Pic}(X_{P_0})$, where
$w_i:=\#(R_i^{\times})/(q-1)$.

Also, recall that $M(P_0)$ (resp.~$S(P_0)$) denotes the space of automorphic forms (resp.\ cusp forms) of Drinfeld type for $\Gamma_0(P_0)$ with the
Petersson inner product $\langle f, f \rangle_{P_0}$.

Let $\mathcal{F}(P_0)$ be an orthogonal basis for $S(P_0)$ consisting
of normalized newforms. By the Jacquet-Langlands correspondence over $k$ and the multiplicity-one theorem,
for each form $f \in \mathcal{F}(P_0)$,
there is a unique one-dimensional eigenspace $\R e_f$ in $\mathrm{Pic}(X_{P_0})\otimes_{\Z} \R$ such that $\langle e_f , e_f \rangle = 1$ and
$t_m e_f = \lambda_m(f)e_f$ for each monic polynomial $m \in A$ with $(m,P_0)=1$, where $t_m$
is the Hecke correspondence and $\lambda_m(f) \in \R$ is the eigenvalue for $f$ associated to the Hecke operator $T_m$ (see e.g. \cite[\S 2.4, \S 4.4.1]{W}).

Let $e_f \in \mathrm{Pic}(X_{P_0})\otimes_{\Z} \R$ correspond to $f \in \mathcal{F}(P_0)$ as above,
and define
\[
e^{*}:=\sum_{i=1}^{n}\frac{1}{w_i}e_i.
\]
Then an orthonormal basis for $\mathrm{Pic}(X_{P_0})\otimes_{\Z} \R$ is given by
\[
\left\{\frac{e^{*}}{\sqrt{\langle e^{*}, e^{*} \rangle}}\right\} \cup \left\{e_f : ~ f \in \mathcal{F}(P_0) \right\}.
\]

Given a character $\chi$ of $\mathrm{Pic}(\mathcal{O}_D)$ and a Gross point $x \in \mathrm{Gr}_{D, P_0}$, define
\[
c_{\chi}:=\sum_{\sigma \in \mathrm{Pic}(\mathcal{O}_D)}\overline{\chi(\sigma)}x^{\sigma} \in \mathrm{Pic}(X_{P_0})\otimes_{\Z} \C.
\]
Moreover, given a form $f \in \mathcal{F}(P_0)$,
recall that $L(f \times \chi, s)$ denotes the Rankin-Selberg $L$--function associated to $f$ and $\chi$
(normalized so that the central value occurs at $s=1/2$). Then Papikian \cite{P2}, Wei and Yu \cite[Thm.~3.3]{WY} proved
the following Gross-type formula (recall that $D$ is irreducible and $\deg(D)$ is odd)
\begin{equation}\label{PWY}
L(f \times \chi, 1/2) = \frac{\langle f, f \rangle_{P_0}}{q^{\frac{\deg(D) + 1}{2}}}
\left|\langle c_{\chi}, e_f\rangle\right|^2,
\end{equation}
where $\langle f, f \rangle_{P_0}$ is the Petterson inner product.

We now give an alternative expression for \eqref{PWY} which will be useful for our calculations. Let
$M_B^{\C}(P_0)$ be the vector space of $\C$-valued functions on $\mathrm{Pic}(X_{P_0}) \otimes_{\Z} \C$, with
inner product
\[
\langle \phi, \psi \rangle : = \sum_{i=1}^{n}w_i \phi(e_i)\overline{\psi(e_i)}.
\]
Then the map which sends a generator $e_i$ to its characteristic function $\delta_{e_i}$ induces
an isomorphism
\[
\mathrm{Pic}(X_{P_0}) \otimes_{\Z} \C \cong M_B^{\C}(P_0)
\]
defined by
\[
e=\sum_{i=1}^{n}c_ie_i \longmapsto \widetilde{e}:=\sum_{i=1}^{n}c_i\delta_{e_i}.
\]
Moreover, this map is an isometry of inner-product spaces, i.e.,
$\langle \widetilde{e}, \widetilde{e}^{\prime} \rangle = \langle e, e^{\prime} \rangle$
for any $e, e^{\prime} \in \mathrm{Pic}(X_{P_0}) \otimes_{\Z} \C$.

Let $\widetilde{f}=\widetilde{e_f}$ denote the image of $e_f$ under this isomorphism. Then an orthonormal basis for $M_B^{\R}(P_0)$ is given by
\[
\Big\{\frac{\widetilde{e}^{*}}{\sqrt{\langle \widetilde{e}^{*} , \widetilde{e}^{*} \rangle}}\Big\} \cup \{\widetilde{f}:~f \in \mathcal{F}(P_0)\}.
\]
We can now express \eqref{PWY} as
\begin{equation}\label{gf}
L(f \times \chi, 1/2)=\frac{\langle f, f \rangle_{P_0}}{q^{\frac{\deg(D) + 1}{2}}}\,
\Biggl| \sum_{\sigma \in \mathrm{Pic}(\mathcal{O}_D)}\overline{\chi(\sigma)}w_{\sigma}\widetilde{f}(x^{\sigma})\Biggr|^2,
\end{equation}
where we write $w_{\sigma}$ for $w_i$ if $x^{\sigma}$ lies in the class $e_i$.

\section{Proof of Theorem~\ref{Grossed}}

In this section we prove Theorem \ref{Grossed}.  We begin by showing that
\begin{equation}
\label{eq:Nformula}
\frac{N_{G_D, P_0, e_i}}{|G_D|} = \mu_{P_0}(e_i) + \frac{1}{w_i} \frac{1}{h(D)}\sum_{\substack{ \chi \in \mathrm{Pic}(\mathcal{O}_D)\\ \chi_{|_G}=1}}
\sum_{f \in \mathcal{F}(P_0)} \langle \widetilde{e}_i, \widetilde{f} \rangle W_{\chi, D,\widetilde{f}},
\end{equation}
where the Weyl sum is defined by
\[
W_{\chi, D, \widetilde{f}}:=\sum_{\sigma \in \mathrm{Pic}(\mathcal{O}_D)}\chi(\sigma)w_{\sigma}\widetilde{f}(x^{\sigma}).
\]

We have
\begin{equation*}
 w_i N_{G_D, P_0, e_i} = w_i \# \{ \sigma \in G_D : x_{0,D}^{\sigma} = e_i \} = \sum_{\sigma \in G_D} w_\sigma   \widetilde{e}_i(x^{\sigma}).
\end{equation*}
By decomposing the function $\widetilde{e}_i$ into a Hecke basis in $M_B^{\mathbb{R}}(P_0)$, we get
\[
\widetilde{e}_i(z)=
\frac{\langle \widetilde{e}_i, \widetilde{e}^* \rangle}{\langle \widetilde{e}^*, \widetilde{e}^* \rangle}\widetilde{e}^*(z)
+ \sum_{f \in \mathcal{F}(P_0)}\langle \widetilde{e}_i, \widetilde{f} \rangle \widetilde{f}(z).
\]
Therefore
\[
w_i N_{G_D, P_0, e_i} = \frac{\langle \widetilde{e}_i, \widetilde{e}^* \rangle}{\langle \widetilde{e}^*, \widetilde{e}^* \rangle}
\sum_{\sigma \in G_D} w_{\sigma} \widetilde{e}^*(x^{\sigma})
+ \sum_{f \in \mathcal{F}(P_0)}\langle \widetilde{e}_i, \widetilde{f} \rangle \sum_{\sigma \in G_D}w_{\sigma}\widetilde{f}(x^{\sigma}).
\]
Now, recall that the probability measure $\mu_{P_0}$ on $\mathcal{S}$ is defined by
\[
\mu_{P_0}(e_i):=\frac{w_i^{-1}}{\sum_{j=1}^{n}w_j^{-1}}
\]
where $w_i:=\#(R_i^{\times})/(q-1)$. Then, using that $\langle \widetilde{e}_i, \widetilde{e}^* \rangle = 1$ for all $i$ and
$\langle \widetilde{e}^*, \widetilde{e}^* \rangle = \sum_{j=1}^{n}w_j^{-1}$, we get
\[
\frac{\langle \widetilde{e}_i, \widetilde{e}^* \rangle}{\langle \widetilde{e}^*, \widetilde{e}^* \rangle} = w_i\mu_{P_0}(e_i).
\]
Also, we compute
\begin{equation*}
\sum_{\sigma \in G_D} w_{\sigma} \widetilde{e}^*(x^{\sigma}) =
\sum_{\sigma \in G_D} w_{\sigma} \sum_{i=1}^{n} \frac{1}{w_i} \widetilde{e}_i (x^{\sigma}) = |G_D|.
\end{equation*}
Last, by Fourier analysis we have
\[
\frac{1}{|G_D|}\sum_{\sigma \in G_D}w_{\sigma}\widetilde{f}(x^{\sigma}) =
\frac{1}{h(D)}\sum_{\substack{ \chi \in \mathrm{Pic}(\mathcal{O}_D)\\ \chi_{|_{G_D}}=1}}\sum_{\sigma \in \mathrm{Pic}(\mathcal{O}_D)}\chi(\sigma)w_{\sigma}\widetilde{f}(x^{\sigma}).
\]
Then combining these calculations yields \eqref{eq:Nformula}.

We now turn to the proof of Theorem \ref{Grossed}. By Cauchy's inequality, we have
\begin{equation}\label{Nbound}
\left|\frac{N_{G_D, P_0, e_i}}{|G_D|} - \mu_{P_0}(e_i)\right| \leq
\frac{1}{w_i} \frac{1}{h(D)}\sum_{\substack{ \chi \in \mathrm{Pic}(\mathcal{O}_D)\\ \chi_{|_{G_D}}=1}} M_1^{1/2} M_2^{1/2},
\end{equation}
where
\[
M_1:=\sum_{f \in \mathcal{F}(P_0)}|\langle \widetilde{e}_i , \widetilde{f} \rangle|^2 , \qquad M_2:=\sum_{f \in \mathcal{F}(P_0)}|W_{\chi, D,\widetilde{f}}|^2.
\]

By Bessel's inequality, we get
\[
M_1 \leq \langle \widetilde{e}_i, \widetilde{e}_i \rangle = w_i.
\]

Next, by the period formula \eqref{gf} we have
\[
|W_{\chi, D, \widetilde{f}}|^2 = \frac{q^{1/2} \lVert D \rVert^{1/2}}{\langle f, f \rangle_{P_0}} L(f \times \overline{\chi}, 1/2),
\]
which yields
\[
M_2=q^{1/2}\lVert D \rVert^{1/2} \sum_{f \in \mathcal{F}(P_0)} \frac{L(f \times \overline{\chi}, 1/2)}{\langle f, f \rangle_{P_0}}.
\]
Theorem \ref{lindelof} gives the uniform Lindel\"of bound
\[
L(f \times \overline{\chi}, 1/2) \ll_{\varepsilon} q^{m\varepsilon} \ll_{\varepsilon} \lVert P_0 \rVert^{\varepsilon} \lVert D \rVert^{\varepsilon},
\]
where we recall that $m=O(\deg(P_0) + \deg(D))$. Then applying this bound gives
\[
M_2^{1/2} \ll_{\varepsilon} C(P_0)^{1/2} q^{1/4} \lVert P_0 \rVert^{\varepsilon/2} \lVert D \rVert^{1/4 + \varepsilon/2},
\]
where
\[
C(P_0):= \sum_{f \in \mathcal{F}(P_0)} \frac{1}{\langle f, f \rangle_{P_0}}.
\]

To bound $C(P_0)$, note that by \cite[Corollary 5.6]{P1} we have $\langle f, f \rangle_{P_0} \gg_{\varepsilon} \lVert P_0 \rVert^{1-\varepsilon}$.
Moreover, since the dimension of $S(P_0)$ is equal to $h_{P_0}-1$, where
\begin{align}\label{dimensionformula}
h_{P_0}:=\frac{1}{q^2-1}\left(\lVert P_0\rVert - 1\right) + \frac{q}{2(q+1)}\left(1-(-1)^{\deg(P_0)}\right)
\end{align}
(see for example \cite[p.~740]{WY}), we get
\begin{align}\label{dimensionbound}
 \# \mathcal{F}(P_0) \ll \lVert P_0\rVert.
\end{align}
Hence
\begin{align*}
C(P_0) \ll_{\varepsilon} \lVert P_0\rVert^{\varepsilon}.
\end{align*}

Finally, by combining the preceding estimates, it follows from \eqref{Nbound} and the lower bound
\[
h(D) \gg_{\varepsilon} q^{-1}  \lVert D \rVert^{1/2-\varepsilon}
\]
that
\[
\left|\frac{N_{G_D, P_0, e_i}}{|G_D|} - \mu_{P_0}(e_i)\right| \ll_{\varepsilon} [\mathrm{Pic}(\mathcal{O}_D):G_D] q^{5/4} \lVert P_0 \rVert^{3\varepsilon/2}
 \lVert D \rVert^{-1/4 + 3\varepsilon/2}.
\]
Replacing $\varepsilon$ with $2\varepsilon/3$, we complete the proof.
\qed


\end{document}